\newtheorem{thm}{Theorem}[section]
\newtheorem{cor}[thm]{Corollary}
\newtheorem{lem}[thm]{Lemma}
\newtheorem{prop}[thm]{Proposition}
\theoremstyle{definition}
\newtheorem{defn}[thm]{Definition}
\theoremstyle{remark}
\newtheorem{rem}[thm]{Remark}
\numberwithin{equation}{section}
\newcommand{\cI}{\mathcal{I}}
\newcommand{\RR}{\mathbb{R}}
\newcommand{\CC}{\mathbb{C}}
\newcommand{\NN}{\mathbb{N}}
\newcommand{\QQ}{\mathbb{Q}}
\newcommand{\powerset}{\raisebox{.15\baselineskip}{\Large\ensuremath{\wp}}}
\def\@tocline#1#2#3#4#5#6#7{\relax
  \ifnum #1>\c@tocdepth 
  \else
    \par \addpenalty\@secpenalty\addvspace{#2}%
    \begingroup \hyphenpenalty\@M
    \@ifempty{#4}{%
      \@tempdima\csname r@tocindent\number#1\endcsname\relax
    }{%
      \@tempdima#4\relax
    }%
    \parindent\z@ \leftskip#3\relax \advance\leftskip\@tempdima\relax
    \rightskip\@pnumwidth plus4em \parfillskip-\@pnumwidth
    #5\leavevmode\hskip-\@tempdima
      \ifcase #1
       \or\or \hskip 1em \or \hskip 2em \else \hskip 3em \fi%
      #6\nobreak\relax
    \dotfill\hbox to\@pnumwidth{\@tocpagenum{#7}}\par
    \nobreak
    \endgroup
  \fi}
\subjclass[2020]{44A60, 32A26, 14P05}
\keywords{Moment problem, determinacy, indeterminacy, integral transform, weighted polynomial approximation, analytic bounded point evaluations, real algebraic curve}
\begin{document}
\title[Riesz variational principle]{Moment indeterminateness: the Marcel Riesz variational principle
} %

\author[D. P. Kimsey]{David P. Kimsey}
\address{(DPK) School of Mathematics and Statistics\\
Newcastle University\\
Newcastle upon Tyne NE1 7RU UK}
\email{david.kimsey@newcastle.ac.uk}

\author[M. Putinar]{Mihai Putinar}
\address{(MP) Department of Mathematics \\
University of California at Santa Barbara \\
Santa Barbara, CA 93106-3080 USA and \\
School of Mathematics and Statistics\\
Newcastle University\\
Newcastle upon Tyne NE1 7RU UK}
\email{mputinar@math.ucsb.edu}

\begin{abstract} 
The discrete data encoded in the power moments of a positive measure, fast decaying at infinity on euclidean space, is incomplete for recovery, leading to the concept of moment indeterminateness. On the other hand, classical integral transforms (Fourier-Laplace, Fantappi\`e, Poisson) of such measures are complete, often invertible via an effective inverse operation. The gap between the two non-uniqueness/ uniqueness phenomena is manifest in the dual picture, when trying to extend the measure, regarded as a positive linear functional, from the polynomial algebra to the full space of continuous functions. This point of view was advocated by Marcel Riesz a century ago, in the single real variable setting. Notable advances in functional analysis have root in Riesz' celebrated four notes devoted to the moment problem. A key technical ingredient being there the monotone approximation by polynomials of kernels of integral transforms. With inherent new obstacles we reappraise in the context of several real variables M. Riesz' variational principle. The result is an array of necessary and sufficient moment indeterminateness criteria, some raising real algebra questions, others involving intriguing analytic problems,  all gravitating around the concept of moment separating function.
\end{abstract}

\maketitle

\section{Introduction}

Traditionally, the inverse problem of estimating a mass distribution from power moment data on the real line emerged during the second half of XIX-th Century from best uniform approximation questions and continued fraction expansions. The advance of integration theory and functional analysis, both partially motivated by the very moment problem we refer to, revealed new facets of it, and more importantly forged novel mathematical tools. In this respect,
the four notes Marcel Riesz wrote between 1922 and 1923, devoted to the moment problem on the line, burst of original ideas and far reaching new results
\cite{Riesz-collected}. One of the widely circulated innovations contained in these notes is M. Riesz' extension lemma of positive linear functionals, leading to the construction of the integral of discontinuous (measurable) functions with respect to a positive measure. This non-constructive extension of linear functionals is similar, arguably more intuitive, and precedes the not less celebrated Hahn-Banach lemma \cite{collective}.

The aim of the present note is to isolate and exploit from Riesz' ``Troisieme Note" \cite{Riesz-3} a few pertinent observations which shed light on the intricate nature of 
moment indeterminateness in several variables. The vantage of today's integration theory, in particular the construction of Daniell's integral, simplifies the language, offering a novel perspective on the genuine complications of multivariate indeterminateness.

We focus below on a euclidean space setting and polynomial functions, although a generalization to abstract measures defined on a locally compact space and a prescribed space of initial test functions is not more demanding.
Let $d \geq 1$ be a fixed dimension. We denote  $x^\alpha = x_1^{\alpha_1} x_2^{\alpha_2} \ldots x_d^{\alpha_d}$ for a point $x = (x_1, x_2, \ldots,x_d) \in \RR^d$ and multi-index $\alpha = (\alpha_1, \alpha_2, \ldots, \alpha_d) \in \NN_0^d.$ The euclidean norm is $\| x \|^2 = x_1^2 + x_2^2 + \ldots + x_d^2.$
The algebra of polynomials with real coefficients will be denoted by $\RR[x_1, \ldots, x_d]$ or simply $\RR[x]$. If necessary, we will pass to complex coefficients, writing then
$\CC[x_1, \ldots, x_d]$.

Let $X \subset \RR^d$ be a closed subset and denote by $C_p(X)$ the space of real valued, continuous functions on $X$, having polynomial growth at infinity. That is, a continuous function $f : X \longrightarrow \RR$ belongs to $C_p(X)$ if there exists a positive integer $n$ and a positive constant $C$, such that
$$ |f(x)| \leq C (1 + \| x \|)^n, \ \ x \in X.$$
The vector space $C_p(X)$ is an inductive limit of normed spaces, with semi-norms given by the best constants $C$ appearing above. In view of F. Riesz' Theorem, and its offsprings, a positive linear functional $L : C_p(X) \longrightarrow \RR$ is represented by a finite, positive Radon measure $\mu$ supported by $X$:
$$ L(f) = \int_X f d\mu, \ \ f \in C_p(X).$$
See for instance \cite{Bourbaki} Chapters 3 and 4, where integration on locally compact spaces is treated in full detail. The natural restriction map
$\RR[x_1, \ldots, x_d] \longrightarrow C_p(X)$ defines a subalgebra, denoted $\RR[X]$, and called the set of polynomial functions on $X$. Notice that this restriction map may not be injective.
The Radon measure $\mu$ associated to a linear, positive functional $L \in C_p(X)^\ast$ is called {\it admissible}, reflecting the classical assumption that
all polynomial functions, and hence power moments, are integrable:
$$ \int_X |p| d\mu  <\infty, \ \ p \in \RR[x_1, \ldots, x_d].$$ Two such measures $\mu_1, \mu_2$ are called {\it moment equivalent}, or throughout this article simply {\it equivalent} if
$$  \int_X p d\mu _1 =  \int_X p d\mu_2, \ \ p \in \RR[x_1, \ldots, x_d].$$
In this case, we shall write $\mu_1 \sim_X \mu_2$, or more simply $\mu_1 \sim \mu_2$ when no confusion can possibly arise. If there exist two distinct measures $\mu_1$ and $\mu_2$ such that $\mu_1 \sim_X \mu_2$, then we will say that $\mu_1$ is {\it $X$-indeterminate}. If $X = \RR^d$, then we will simply say that $\mu_1$ is {\it indeterminate}.

We translate below, in the modern setting, the core of section 6 (``La question d'unicit\'e. Crit\`eres pr\'eliminaires") of \cite{Riesz-3}.

\begin{thm}\label{basic}[M. Riesz, 1923] Let $X$ denote a closed subset of $\RR^d$ and let $L : \RR[x] \longrightarrow \RR$ be a linear functional. Then

1) The functional $L$ is representable by an admissible measure $\mu$ supported on $X$ if and only if:
\begin{equation}\label{positive}
(f \in \RR[x], \ \ f \geq 0) \ \implies (L(f) \geq 0).
\end{equation}

2) There exists another admissible measure $\nu$, moment equivalent and distinct to the representing measure $\mu$ if and only if
there exists a function $\phi \in C_p(X) \setminus \RR[x]$ satisfying
\begin{equation}\label{separating}
\sup_{p\leq \phi} L(p) < \inf_{q \geq \phi} L(q).
\end{equation}
\end{thm}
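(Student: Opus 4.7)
For Part 1, my plan is to dispatch necessity immediately---if $L(f) = \int_X f d\mu$ and $f \geq 0$ on $X$, then $L(f) \geq 0$---and to derive sufficiency from M.~Riesz's extension lemma. The crucial observation is that $\RR[x] \subset C_p(X)$ is cofinal for the pointwise order on $X$, because every $f \in C_p(X)$ is dominated in absolute value by some $C(1 + \|x\|^2)^n$. Condition \eqref{positive} says exactly that $L$ is positive on $\RR[x]$ with respect to the cone inherited from $C_p(X)$, so Riesz's lemma yields a positive linear extension $\wt L : C_p(X) \to \RR$. Restricting $\wt L$ to $C_c(X)$ and invoking the F.~Riesz representation theorem produces a Radon measure $\mu$ on $X$. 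Upgrading this to the equality $L(p) = \int_X p d\mu$ for $p \in \RR[x]$ is a Daniell-type argument: using Dini's theorem one checks that $\wt L$ is continuous on suitably monotone sequences, then bootstraps from $C_c(X)$ via the polynomial majorants $(1 + \|x\|^2)^n$ to identify $\wt L$ with integration against $\mu$, in particular certifying admissibility.

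For the easy direction of Part 2, suppose $\mu_1 \neq \mu_2$ are admissible, equivalent, and distinct. The uniqueness clause of the F.~Riesz representation theorem provides $\phi \in C_c(X) \subset C_p(X)$ with $\int_X \phi d\mu_1 \neq \int_X \phi d\mu_2$; no polynomial could separate the two, so $\phi \notin \RR[x]$. For $p \in \RR[x]$ with $p \leq \phi$ on $X$, $L(p) = \int_X p d\mu_i \leq \int_X \phi d\mu_i$ for $i=1,2$, hence $\sup_{p \leq \phi} L(p) \leq \min\bigl(\int_X \phi d\mu_1, \int_X \phi d\mu_2\bigr)$; dually $\inf_{q \geq \phi} L(q) \geq \max\bigl(\int_X \phi d\mu_1, \int_X \phi d\mu_2\bigr)$, and \eqref{separating} is forced.

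The substantive direction is the converse. Set $A = \sup_{p \leq \phi} L(p)$ and $B = \inf_{q \geq \phi} L(q)$, so $A < B$. Since $\phi \notin \RR[x]$, the decomposition of an element of $M := \RR[x] + \RR \phi$ as $p + t\phi$ is unique, and for each $c \in [A,B]$ the formula $L_c(p + t\phi) := L(p) + tc$ defines a linear extension of $L$ to $M$. The key calculation is the positivity of $L_c$ on $M$ with respect to the $X$-pointwise cone: if $p + t\phi \geq 0$ on $X$ with $t > 0$, then $-p/t \leq \phi$, so $-L(p)/t \leq A \leq c$ and $L_c(p + t\phi) \geq 0$; the case $t < 0$ is symmetric via $-p/t \geq \phi$ and $c \leq B$; the case $t = 0$ is assumption \eqref{positive}. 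Applying the Riesz extension lemma starting from $M$ (cofinality is inherited from $\RR[x]$), followed by the measure-reconstruction procedure of Part 1, yields for each $c$ an admissible measure $\mu_c$ with $L(p) = \int_X p d\mu_c$ for $p \in \RR[x]$ and $\int_X \phi d\mu_c = c$. Since $\int_X \phi d\mu$ lies in $[A,B]$, picking any $c \in [A,B]$ with $c \neq \int_X \phi d\mu$ produces $\nu := \mu_c$ moment equivalent to $\mu$ but distinct from it.

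The main technical hurdle I foresee lies in Part 1, in the passage from a positive linear functional on $C_p(X)$ (delivered abstractly by Riesz's lemma) to a representing Radon measure on a possibly non-compact $X$: one must rule out mass escaping to infinity and match $\wt L$ with $\mu$-integration beyond $C_c(X)$, and here the interplay between the radial cofinal sequence $(1 + \|x\|^2)^n$ and Dini--Daniell monotone-continuity arguments is indispensable.
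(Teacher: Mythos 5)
Your proposal is correct and follows essentially the same route as the paper: M.~Riesz's extension lemma applied one function at a time (with the interval $[\sup_{p\leq\phi}L(p),\inf_{q\geq\phi}L(q)]$ of admissible values for $\tilde L(\phi)$), Zorn's lemma, and the F.~Riesz representation of the resulting positive functional on $C_p(X)$, with the converse of Part 2 obtained by noting that the two integrals of a separating continuous function both lie in that interval. You spell out a few details the paper leaves implicit (the positivity check for $L_c$ on $\RR[x]+\RR\phi$ and the identification of $\tilde L$ with $\mu$-integration beyond $C_c(X)$), but the argument is the same.
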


The proof relies on the cited extension lemma. Namely, start with a continuous function of polynomial growth on $X$, $\phi \in C_p(X)$.
There are non-trivial polynomial functions $p, q \in \RR[x]$, such that $p \leq \phi \leq q$ on $X$. In particular, if the functional $L$ is positive, then  $L(p) \leq L(q)$. Choose {\it any} value $t \in [\sup_{p\leq \phi} L(p),  \inf_{q \geq \phi} L(q)].$ The linear extension of the functional
$$ \tilde{L}( p + \lambda \phi) = L(p) + \lambda t, \ \ \lambda \in \RR,$$
turns out to be positive on the vector space $\RR[x] + \RR \phi$. A maximal element in a chain of such extensions, ordered by inclusion, exists by Zorn's Lemma, and it is necessarily equal to the full space $C_p[X]$. If the interval $[\sup_{p\leq \phi} L(p),  \inf_{q \geq \phi} L(q)]$ does not reduce to a point, then
one can choose different extension of the functional $L$, and hence different representing measures. Conversely, if there are two distinct measures $\mu$ and $\nu$ possessing the same values on polynomial functions, then there exists a continuous function $\phi \in C_p[X]$ satisfying
$$ \int_X \phi d\nu \neq \int_X \phi d\mu.$$
Then these two distinct values belong to the interval $[\sup_{p\leq \phi} L(p),  \inf_{q \geq \phi} L(q)].$ 

Marcel Riesz' extension technique was exploited (and sometimes reinvented over decades) in relation to the construction of various integrals and measures. Notable in this respect is Daniell's integral \cite{Cotlar,Bourbaki}. A general account on extensions of positive functionals and relations to specific integrals is contained in Metzler's articles \cite{Metzler-1, Metzler-2}. For applications to moment problems we refer to
Akhiezer's book \cite{Akhiezer-book} Chapter 6, Section 6, and also \cite{AK}, pg. 137, or \cite{Arens,Butzer}.

\begin{defn} A function $\phi$ appearing in condition (\ref{separating}) is called {\it separating} with respect to the positive functional $L$.
\end{defn}

\begin{rem}
The subtitle of section 6 in \cite{Riesz-3} very accurately labels the above theorem as only containing ``preliminary criteria" of existence and uniqueness. A central object, well studied in connection with moment problems on the real line ($d=1$) by at least two generations of mathematicians before Riesz is the so called Christoffel function; in our notation
$$ \rho(\alpha) = \inf_{p(\alpha)=1} L(p^2), \ \ \alpha \in \CC.$$
The determinateness criterion $\rho(\alpha) = 0$ for at least one (and then, all) non-real value $\alpha$ appeared implicitly in the works of Stieltjes and Hamburger, see \cite{Akhiezer-book} for details. By a true tour de force, exposed in section 24 (``\'Etude aprofondie de fonction $\rho(\alpha)$") of \cite{Riesz-3}, M. Riesz proves, in the indeterminate case on the line, that the function $\frac{1}{\rho(\alpha)}$ has a sub-exponential growth, and the integral
$$ \int_{-\infty}^\infty \frac{ \log \frac{1}{\rho(t)}}{1+t^2} dt$$
is absolutely convergent. His deep results preview a full solution to S. Bernstein's weighted, uniform approximation of continuous functions on the entire line,
obtained only a few decades later, cf. \cite{Akhiezer}. Returning to the indeterminate case of the moment problem on $\RR$, a corollary of Riesz' calculations is that {\it any} non-polynomial function is separating for the respective functional. Based on this observation, effective determinateness criteria for the moment problem on the line are deduced, including the well known Carleman criterion, cf. \cite{Riesz-3} pg. 48. 

Regardless to say that, allowing the extension of the positive functional $L : \RR[x] \longrightarrow \RR$ to more general functions, such as semi-continuous ones will enlarge the pool of separating elements in condition  (\ref{separating}), and possibly simplify it. We will return to this observation in the next section.
\end{rem}

The main theme of this article is to extract from various faithful transforms of measures some elementary separating functions for indeterminate, multivariate moment problems. Positivity certificates making our necessary and sufficient conditions for moment indeterminateness more effective are not available at this stage.

A traditional approach, complementary to Riesz' variational principle, relates the multivariate moment problem to the spectral decomposition of strongly commuting tuples of symmetric,
generally unbounded, Hilbert space operators. While the construction in this setting of the joint spectral measure goes hand in hand with the extension of positive linear functionals, the
monotonic approximation of separating functions we propose opens a new vista towards real algebra. For the Hilbert space interpretation of the moment problem we refer to 
Akhiezer's monograph \cite{Akhiezer-book} and the recent book by Schm\"udgen \cite{Sch_book}. Equally relevant, and neglected in our article, is the link between moment indeterminateness and the still mysterious topics of quasi-analytic classes in several variables, see for instance \cite{Ronkin}.

The contents of the article are as follows. Section 2 brings  trigonometric separating functions to the forefront via the Fourier-Laplace transforms. Section 3 deals with discontinuous separating functions. Section 4 relies on Poisson's transform to render a quantitative criteria for indeterminateness. Various operations preserving indeterminateness are analyzed in Section 5. In Section 6, we relate indeterminateness criteria to the existence of bounded point evaluations, a traditional theme in the one variable setting. In Section 7  we focus on moment problems supported by real algebraic, affine curves, with special emphasis on rational curves.

The present article does not refer to the spectral analysis interpretation of the multivariate moment problem. This subject is amply exposed in the monograph \cite{Sch_book}.

\section{Fourier-Laplace transform}

We start with a full space scenario. Let $\mu$ be an admissible measure on $\RR^d$.
For $x = (x_1, \ldots, x_d) \in \RR^d$ and $\xi = (\xi_1, \ldots, \xi_d) \in \RR^d$, we denote $x \cdot \xi = \prod_{j=1}^d x_j \xi_j$ and $\| x \| = \sqrt{x \cdot x}$. The {\it Fourier transform of $\mu$},
$$\hat{\mu}(\xi) := \int_{\RR^d} e^{-i x \cdot \xi} d\mu(x) \quad \quad {\rm for} \quad \xi \in \RR^d$$
is a smooth function. Indeed, since all power moments of $\mu$ are finite,
$$\left|  \left( \frac{ \partial^{\lambda} }{d\xi^{\lambda} } \, \hat{\mu} \right)(\xi) \right| = \left| \int_{\RR^d} x^{\lambda} e^{-i x\cdot \xi} d\mu(x) \right|
\leq \int_{\RR^d} |x^{\lambda} | d\mu(x) < \infty.$$
In addition, according to Bochner's theorem the function $\hat{\mu}$ is positive definite, i.e., the continuous kernel function is positive semi-definite:
$$(\hat{\mu}(\xi_i - \xi_j) )_{i,j=1}^n \succeq 0$$
for any choice of $\xi_1, \ldots, \xi_n \in \RR^d$.

\begin{defn}
For $\xi\in  \RR^d \setminus \{ 0 \}$, we define the {\it push-forward measure} $\mu_{\xi}$ as
$$\int_{\RR} \varphi(t) d\mu_{\xi}(t) = \int_{\RR^d} \varphi(x \cdot \xi) d\mu(x)$$
for every continuous function $\varphi$ of polynomial growth at infinity. 
\end{defn}

Assume $\mu$ and $\nu$ are two distinct, moment equivalent measures, in which case the Fourier transforms of $\mu$ and $\nu$ are distinct, see for instance \cite{Palamodov}. Therefore
there exists $\xi \neq 0$ with the property
$$ \int e^{-i x \cdot \xi} d\mu(x) \neq  \int e^{-i x \cdot \xi} d\nu(x).$$
Moreover, the continuity of the Fourier transform implies that there exists $\delta>0$ so that
 $$ \int e^{-i x \cdot \eta} d\mu(x) \neq  \int e^{-i x \cdot \eta} d\nu(x),$$
for all $\eta, |\eta - \xi| < \delta$.  Riesz' Theorem \ref{basic} implies the following theorem.

\begin{thm}\label{Fourier} Let $\mu$ be a positive measure on $\RR^d$ with finite moments of any order. There exists a different positive measure 
on $\RR^d$ with the same power moments if and only if there exists $\xi \in \RR^d \setminus \{0\}$ and $\epsilon \in \{0, -\pi/2\}$ with the property
$$ \sup_{p(x) \leq \cos( x \cdot \xi + \epsilon)} \int p d\mu <  \inf_{q(x) \geq \cos( x \cdot \xi + \epsilon)} \int q d\mu,$$
where $p, q \in \RR[x_1, \ldots, x_d]$ are polynomials.

Moreover, the above separation condition is open in $(\xi,\epsilon)$.
\end{thm}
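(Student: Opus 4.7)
The plan is to specialize Riesz's preliminary criterion (Theorem \ref{basic}) to separating functions of the form $\phi_{\xi,\epsilon}(x) := \cos(x\cdot \xi + \epsilon)$, with $\epsilon \in \{0, -\pi/2\}$ extracting the real and imaginary parts of the Fourier kernel $e^{-ix\cdot\xi}$. For every $\xi \neq 0$ the function $\phi_{\xi,\epsilon}$ is smooth, bounded, and non-constant, hence lies in $C_p(\RR^d) \setminus \RR[x]$ and qualifies as an admissible candidate in (\ref{separating}).

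For the sufficiency direction I would simply invoke Theorem \ref{basic}(2): the hypothesized strict inequality with $\phi = \phi_{\xi,\epsilon}$ immediately produces a moment-equivalent measure distinct from $\mu$. The converse is the substantive half. Assuming $\nu \neq \mu$ is moment-equivalent, let $L$ denote the common linear functional. Using the cited injectivity of the Fourier transform on Radon measures with moments of all orders (\cite{Palamodov}), one obtains $\hat\mu \not\equiv \hat\nu$; since $\hat\mu(0) = L(1) = \hat\nu(0)$ and both transforms are continuous, there must exist $\xi_0 \neq 0$ where they disagree. Splitting the complex inequality $\hat\mu(\xi_0) \neq \hat\nu(\xi_0)$ into real and imaginary parts forces at least one of $\epsilon = 0$ or $\epsilon = -\pi/2$ to yield $\int \phi_{\xi_0,\epsilon}\, d\mu \neq \int \phi_{\xi_0,\epsilon}\, d\nu$. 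Both integrals must lie inside the sandwich interval $[\sup_{p \leq \phi_{\xi_0,\epsilon}} L(p),\ \inf_{q \geq \phi_{\xi_0,\epsilon}} L(q)]$ appearing in the proof of Theorem \ref{basic}, so that interval has positive length and the required strict separation holds.

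For the openness statement the idea is that the map $F(\xi, \epsilon) := \int \phi_{\xi,\epsilon}\, d(\mu - \nu)$ is smooth on $\RR^d \times \RR$: differentiation under the integral is legitimate because partial derivatives of $\phi_{\xi,\epsilon}$ in $(\xi,\epsilon)$ are dominated by polynomials in $x$ that are integrable against the admissible measures $\mu,\nu$. The non-vanishing locus of $F$ is therefore open in $(\xi, \epsilon)$, and on this locus the same sandwich argument delivers the strict inequality in (\ref{separating}). Note that openness extends beyond $\epsilon \in \{0, -\pi/2\}$ to arbitrary phase shifts, since the $\epsilon$ variable enters smoothly.

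The only non-routine ingredient is the injectivity of the Fourier transform for positive Radon measures having finite power moments of all orders, which I plan to cite from \cite{Palamodov} rather than reprove; the rest is a direct specialization of Riesz's variational principle together with smoothness of $\hat\mu - \hat\nu$. The main conceptual point, therefore, is recognizing that the trigonometric characters $\cos(x\cdot\xi)$, $\sin(x\cdot\xi)$ form a sufficient test family for detecting moment indeterminateness, a fact that reduces the uncountable search for a separating $\phi$ in Theorem \ref{basic} to a finite-dimensional scan over $(\xi,\epsilon)$.
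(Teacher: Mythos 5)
Your proposal is correct and follows essentially the same route as the paper: injectivity of the Fourier transform on admissible measures (cited from \cite{Palamodov}) produces a frequency $\xi\neq 0$ where $\hat\mu$ and $\hat\nu$ disagree, the real/imaginary split yields the $\epsilon\in\{0,-\pi/2\}$ dichotomy, the sandwich interval of Theorem \ref{basic} converts the disagreement into the strict variational inequality, and continuity of $\hat\mu-\hat\nu$ gives openness in $(\xi,\epsilon)$. The paper's argument is exactly this, stated more tersely in the paragraph preceding the theorem.
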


\begin{rem}
\label{rem:pushforward}
By pushing forward the competing polynomials in the above variational inequality on the line spanned by the vector $\xi$, with $\| \xi \| = 1$, we find a much weaker
condition
$$ \sup_{r(x \cdot \xi) \leq \cos( x \cdot \xi + \epsilon)} \int r d\mu <  \inf_{s(x \cdot \xi) \geq \cos( x \cdot \xi + \epsilon)} \int s d\mu,$$
with $r, s \in \RR[t]$ univariate polynomials. In both cases, we encounter the largely open task of certifying inequalities involving polynomials and trigonometric functions.
\end{rem}

Potentially a more accessible context is offered by a bilateral, or traditional, Laplace transform, with output complex variables. To this aim
we recall the {\it Fantappi\`e transform} of an admissible measure $\mu$:

\begin{equation}
\label{eq:FT}
F_{\mu}(z, \xi) = \int_{\RR^d} \frac{ d\mu(x) }{ x \cdot \xi - z }, \ \ z \in \CC, \ {\rm Im}\, z > 0, \xi \in \RR^d.
\end{equation}

Note that the above integral is convergent since $|x \cdot \xi - z| \geq {\rm Im}\, z > 0$.
 Fantappi\`e's transform is an iterated Fourier-Laplace transform, hence invertible. Indeed, 
$$
\int_0^{\infty} e^{-ip [ x \cdot \xi - z] }dp = \frac{1}{i(x \cdot \xi - z)},
$$
and Fubini's theorem yield
$$F_{\mu}(z, \xi) = i \int_0^{\infty} \int_{-\infty}^{\infty} e^{-ip x \cdot \xi} e^{i p z} d\mu(x) dp = i \int_0^{\infty} e^{i p z} \hat{\mu}(p \xi) dp.$$

The Fantappi\`e transform $F_{\mu}(z, \xi)$ is homogeneous of degree $-1$, i.e., 
$$F_{\mu}(t z, t \xi) = t^{-1} F_{\mu}(z, \xi) \quad \quad {\rm for} \quad t \in \RR \setminus \{ 0 \}.$$
Therefore, the values $F_{\mu}(z,w)$, where $\| w \| = 1$ and ${\rm Im}\, z > 0$ determine $F_{\mu}$, and $\mu$.
In complete analogy to Theorem \ref{Fourier} we state the following indeterminateness criterion.

\begin{thm} Let $\mu$ be an admissible measure on $\RR^d$. There exists a moment equivalent, admissible measure, distinct of $\mu$ if and only if
there exists $w \in \RR^d, \ \|w\|=1,$ and $\epsilon \in \{0,1\}$, such that
\begin{equation}
 \sup_{p(x) \leq  \frac{ (1+ \epsilon w \cdot x)}{ (w \cdot x)^2 + 1}} \int p d\mu <  \inf_{q(x) \geq \frac{ (1+ \epsilon w \cdot x)}{ (w \cdot x)^2 + 1}} \int q d\mu,
 \end{equation}
where $p,q$ are polynomials.
\end{thm}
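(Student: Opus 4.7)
\emph{Plan.} The easy implication ($\Leftarrow$) is an immediate application of Theorem~\ref{basic}(2): the candidate function $\phi_{w,\epsilon}(x) := \frac{1+\epsilon\, w\cdot x}{(w\cdot x)^2+1}$ is bounded and continuous on $\RR^d$, hence lies in $C_p(\RR^d)\setminus \RR[x]$, and the hypothesized strict inequality is precisely the separating condition (\ref{separating}) for the functional $L_\mu$.

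For the non-trivial direction ($\Rightarrow$), the plan is to mimic the proof of Theorem~\ref{Fourier}, replacing the Fourier transform by the Fantappi\`e transform. The algebraic core is the decomposition
\[
  \frac{1}{w\cdot x - i} \;=\; \frac{w\cdot x}{(w\cdot x)^2 + 1} + i\,\frac{1}{(w\cdot x)^2 + 1},
\]
which yields $\re F_\mu(i,w) = \int \phi_{w,1}\,d\mu - \int \phi_{w,0}\,d\mu$ and $\im F_\mu(i,w) = \int \phi_{w,0}\,d\mu$. Consequently, two moment equivalent representing measures $\mu_1,\mu_2$ of $L_\mu$ share the integrals of $\phi_{w,\epsilon}$ for both $\epsilon\in\{0,1\}$ if and only if $F_{\mu_1}(i,w) = F_{\mu_2}(i,w)$. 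By the converse argument in the proof of Theorem~\ref{basic}(2), the function $\phi_{w,\epsilon}$ is separating for $L_\mu$ precisely when the map $\tilde\mu\mapsto F_{\tilde\mu}(i,w)$ assumes more than one value as $\tilde\mu$ ranges over representing measures of $L_\mu$.

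Suppose now that $\nu\neq\mu$ is moment equivalent to $\mu$. Invertibility of the Fantappi\`e transform, read off the Fourier--Laplace formula (\ref{eq:FT}), forces $F_\mu\not\equiv F_\nu$ on $\HH\times(\RR^d\setminus\{0\})$. By the joint $(-1)$-homogeneity $F_\sigma(tz,tw) = t^{-1}F_\sigma(z,w)$ and the observation preceding the statement that $F_\mu$ is determined by its values on the slice $\|w\|=1$, there is a unit direction $w_0$ along which $z\mapsto F_{\mu-\nu}(z,w_0)$ is a non-vanishing holomorphic function on $\HH$. Its zeros are therefore discrete, so $F_{\mu-\nu}(it_0,w_0)\neq 0$ for some $t_0>0$, and homogeneity transports this to $F_{\mu-\nu}(i,w_0/t_0)\neq 0$. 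To lift this to a unit $w$ at which the Fantappi\`e evaluation at $z=i$ actually separates some pair of representing measures of $L_\mu$, I would invoke the Cram\'er--Wold device: for $w\in S^{d-1}$ in an open subset, the one-dimensional pushforwards $\mu_w$ and $\nu_w$ are distinct (by Fourier injectivity), hence $L_{\mu_w}$ is an indeterminate Hamburger functional; by the classical Nevanlinna--Weyl disk, the evaluation $\rho\mapsto C_\rho(i)$ takes values in a disk of positive radius as $\rho$ varies among representing measures of $L_{\mu_w}$, producing the desired non-constancy of $\tilde\mu\mapsto F_{\tilde\mu}(i,w) = C_{\tilde\mu_w}(i)$ for some such unit $w$.

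The main technical obstacle is this final lifting step: ensuring that the convex image of the pushforward map $\tilde\mu\mapsto \tilde\mu_w$, from representing measures of $L_\mu$ into representing measures of $L_{\mu_w}$, is not contained in a single fiber of the Weyl evaluation $\rho\mapsto C_\rho(i)$. A cleaner alternative, worth pursuing in parallel, is to argue directly from the smoothness of $w\mapsto F_{\mu-\nu}(i,w)$ on $\RR^d\setminus\{0\}$ and the $(-1)$-homogeneity of the Fantappi\`e transform, showing that vanishing on the whole unit sphere would propagate, via analyticity of the section $z\mapsto F_{\mu-\nu}(z,w)$ along each ray, to identical vanishing on $\HH\times(\RR^d\setminus\{0\})$, contradicting $\mu\neq\nu$.
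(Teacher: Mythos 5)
Your overall route coincides with the paper's: both directions rest on Theorem \ref{basic}, the forward implication passes through the Fantappi\`e transform and the one-dimensional pushforwards $\mu_w,\nu_w$, and your decomposition of $\frac{1}{w\cdot x-i}$ into $\phi_{w,0}$ and $\phi_{w,1}-\phi_{w,0}$ is exactly the paper's closing step. The difference lies in how the evaluation point is forced to be $z=i$ over a unit vector. The paper keeps the \emph{fixed} pair $\mu_w,\nu_w$ --- which are by construction pushforwards of genuine multivariate representing measures, so no lifting problem ever arises --- and invokes the one-variable theory (the Weyl-circle/Nevanlinna parametrization, Akhiezer Section 2.1.2) to pass from $C_{\mu_w}(z_0)\neq C_{\nu_w}(z_0)$ at some $z_0\in\HH$ to $C_{\mu_w}(i)\neq C_{\nu_w}(i)$. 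If you grant that one-variable statement, the obstacle you flag dissolves: you never need the pushforward map from representing measures of $L_\mu$ onto representing measures of $L_{\mu_w}$ to avoid a single fiber of $\rho\mapsto C_\rho(i)$, because $\mu$ and $\nu$ themselves already differ on $\phi_{w,0}$ or $\phi_{w,1}$, and \eqref{separating} follows from Theorem \ref{basic}. (Whether that one-variable step is as automatic as the paper suggests is a fair question --- in the Nevanlinna parametrization two distinct Pick parameters can agree at $i$ --- but the paper at least reduces the whole difficulty to a cited one-dimensional fact rather than to a new lifting problem.)

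Two smaller points. Your homogeneity detour produces a nonzero value $F_{\mu-\nu}(i,w_0/t_0)$ at a non-unit vector, which is precisely why you then need the Cram\'er--Wold/Weyl-disk patch; the paper avoids this by choosing $(z_0,w)$ with $\|w\|=1$ from the outset and only moving $z_0$. And your ``cleaner alternative'' does not work as stated: if $F_{\mu-\nu}(i,\cdot)$ vanishes on all of $S^{d-1}$, homogeneity only transports this vanishing to the $d$-dimensional set $\{(i\|w\|,w):w\neq0\}$; for each fixed $w$ this pins down a single point of the ray $i(0,\infty)$, not the whole ray, so analyticity of $z\mapsto F_{\mu-\nu}(z,w)$ for fixed $w$ yields no contradiction. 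Some version of the one-variable rigidity at $z=i$ therefore seems unavoidable, and that is where the paper places it.
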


\begin{proof}

Assume $\mu \sim \nu$. Then $\mu \neq \nu$ if and only if there exists $w \in S^{d-1}$ such that
$$
\int_{\RR^d} \frac{d\mu_1(x) }{x \cdot w - z} \neq \int_{\RR^d} \frac{d\mu_2(x) }{x \cdot w - z}
$$
where $w \in S^{d-1}$ and $z \in \CC$ with ${\rm Im}\, z > 0$. In other terms, the push-forward measures have distinct Cauchy transforms at the specific point $z$:
$$\int_{\RR} \frac{d\mu_{w}(t)}{t- z} \neq \int_{\RR} \frac{d\nu_{w}(t) }{t -z}.$$
But the measures $\mu_w, \nu_w$ are moment equivalent on the line. By well known results of one variable theory (specifically the parametrization of Weyl's circle by values of Cauchy transforms of representing measures), one also finds
$$\int_{\RR} \frac{d\mu_{w}(t)}{t- i} \neq \int_{\RR} \frac{d\nu_{w}(t) }{t -i},$$
see Section 2.1.2 in \cite{Akhiezer-book}.

By taking real, respectively imaginary, parts of $\frac{1}{ w \cdot x -i}$ one recovers the separating functions in the statement.

\end{proof}

Fantappi\`e's transform is particularly useful for measures supported on a convex cone. In this context, complex analyticity and complete monotonicity properties enhance the characterization of the range of the transform and provide efficient inversion formulae. We refer to \cite{HenkinShananin} for full details.
Next we extract a few relevant observation from the theory of Fantappi\`e transform on convex cones. We can regard the analysis below as an analogue of Stieltjes moment problem on the real semi-axis.

Let
$\Gamma \subseteq \RR^d$ be an acute, convex and solid cone and $\Gamma^* = \{ \eta \in \RR^d: \eta \cdot x \geq 0 \quad {\rm for} \;\; {\rm all} \quad x \in \Gamma \}$ be the dual cone of $\Gamma$. Let $\mu$ be an admissible measure supported on $\Gamma^*$.  Note that in this case the Fantappi\'e transform 
$$
F_{\mu}(z, w) = \int_{\Gamma^*} \frac{d\mu(x)}{w \cdot x - z}
$$
admits a complex analytic extension to the domain
$${\rm Re}\, w \in \Gamma \quad {\rm and} \quad {\rm Re}\, z < 0.$$

In particular the range of real values $w \in \Gamma, \  z <0,$ is a uniqueness set for the complex analytic function $F_\mu$ defined on the tube domain over this convex set. In short, due to homogeneity, the values
$$ 
F_{\mu}(-1, a) = \int_{\Gamma^*} \frac{d\mu(x)}{a \cdot x +1}, \ \ a \in \Gamma,
$$ determine the measure $\mu$. Moreover, since the above function is real analytic in the variable $a$, a non-trivial zero set of a
difference $F_{\mu}(-1, a) -F_{\nu}(-1, a)$ is a proper analytic subset of ${\rm int} \Gamma$.

Mutatis mutandis, the following result is proved.

\begin{thm}\label{CONE}
Let $\Gamma \subseteq \RR^d$ be an acute, convex and solid cone and let $\mu$ be an admissible measure supported by the dual cone $\Gamma^\ast$.
There exists a different, admissible measure supported on $\Gamma^\ast$ and moment equivalent to $\mu$ if and only if there exists $a \in {\rm int} \Gamma$, such that
\begin{equation}
 \sup_{\stackrel{p(x) \leq  \frac{ 1}{ a \cdot x + 1}} {x \in \Gamma^\ast}} \int p d\mu <  \inf_{\stackrel{q(x) \geq  \frac{ 1}{ a \cdot x + 1}} {x \in \Gamma^\ast}} \int q d\mu,
 \end{equation}
 where $p,q$ are polynomials functions on $\Gamma^\ast$.
 
 Moreover, the range of values of $a$ above is an open, everywhere dense subset of  ${\rm int} \, \Gamma$.
\end{thm}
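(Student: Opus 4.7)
The plan is to follow the template of Theorem \ref{Fourier}, with the Fantappi\`e transform on the cone $\Gamma^\ast$ supplying the family of separating functions and Riesz' Theorem \ref{basic} converting differing integral values into the desired variational inequality. For sufficiency ($\Leftarrow$), I would fix $a \in {\rm int}\, \Gamma$ and observe that $\phi_a(x) := (a \cdot x + 1)^{-1}$ satisfies $0 < \phi_a(x) \leq 1$ on $\Gamma^\ast$, since $a \cdot x \geq 0$ there by duality; thus $\phi_a \in C_p(\Gamma^\ast) \setminus \RR[x]$. The hypothesized strict inequality is then precisely condition (\ref{separating}) of Theorem \ref{basic} applied with $\phi = \phi_a$, so $\phi_a$ is separating for the functional $L$ and Riesz' extension procedure produces a second admissible measure on $\Gamma^\ast$ moment equivalent to $\mu$.

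For necessity ($\Rightarrow$), I would start from $\mu \sim_{\Gamma^\ast} \nu$ with $\mu \neq \nu$ and invoke the Fantappi\`e transform. Both $F_\mu$ and $F_\nu$ extend holomorphically to the tube $\{\re z < 0\} \times ({\rm int}\, \Gamma + i \RR^d)$, and by the discussion preceding the statement the real trace $a \mapsto F_\mu(-1, a)$ on ${\rm int}\, \Gamma$ already determines an admissible $\Gamma^\ast$-supported measure. Hence there exists some $a_0 \in {\rm int}\, \Gamma$ with
$$\int_{\Gamma^\ast} \phi_{a_0}\, d\mu \neq \int_{\Gamma^\ast} \phi_{a_0}\, d\nu,$$
and both values give admissible positive extensions of $L$ to the vector space $\RR[x] + \RR \phi_{a_0}$. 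As in the proof of Theorem \ref{basic}, any such extension value must lie in the interval $[\sup_{p \leq \phi_{a_0}} L(p), \inf_{q \geq \phi_{a_0}} L(q)]$; the distinctness of the two values forces this interval to be non-degenerate, which is exactly the strict variational inequality sought at $a_0$.

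For the density claim, I would note that $g(a) := F_\mu(-1, a) - F_\nu(-1, a)$ is real-analytic on the connected open set ${\rm int}\, \Gamma$, as the restriction to a real slice of a holomorphic function of $d$ complex variables, and, by the same injectivity remark, not identically zero on ${\rm int}\, \Gamma$. Its zero set is therefore a proper real-analytic subvariety with empty interior, so $\{a \in {\rm int}\, \Gamma : g(a) \neq 0\}$ is open and dense in ${\rm int}\, \Gamma$, and for each such $a$ the preceding paragraph supplies the required separation. The only potentially delicate ingredient is the injectivity and holomorphic extension of the Fantappi\`e transform on $\Gamma^\ast$-supported admissible measures; these are documented in \cite{HenkinShananin} and already invoked in the paragraph preceding the statement, so with them in hand the remainder is a routine adaptation of Theorem \ref{Fourier}.
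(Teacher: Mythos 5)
Your proposal is correct and follows essentially the same route as the paper, which derives the theorem ``mutatis mutandis'' from the preceding discussion: injectivity and holomorphic extension of the Fantappi\`e transform on $\Gamma^\ast$-supported measures, real-analyticity of $a \mapsto F_{\mu}(-1,a)-F_{\nu}(-1,a)$ so that its zero set is a proper analytic subset of ${\rm int}\,\Gamma$, and Riesz' Theorem \ref{basic} to convert a difference of transform values at $a$ into the variational inequality. The only point worth a sentence more than you give is openness of the full set of admissible $a$, which follows by identifying it, via Theorem \ref{basic} in both directions, with the union over all $\nu \sim \mu$ of the open sets $\{a : F_{\mu}(-1,a) \neq F_{\nu}(-1,a)\}$.
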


\section{Discontinuous separating functions}

Let $X \subset \RR^d$ be a closed set. Working with test functions of polynomials growth at infinity imposes the following adaptation of the class of Baire-1 functions. We refer to \cite{Valee-Poussin} for the classical setting.

\begin{defn}
A function $f : X \longrightarrow \RR$ is called a {\it Baire function of the first category and of polynomial growth}, in short $f \in {\mathcal{BP}}_1(X)$, if there exists a sequence
$\phi_n \in C_p(X)$ subject to a uniform bound $(M>0, N \geq 0)$:
 \begin{equation}\label{pointwise}
 |\phi_n(x)| \leq M(1+ \| x \|^2)^N, \ \ n \geq 1,\ x \in X,
 \end{equation}
such that, pointwisely, 
$$ f(x) = \lim_{n \rightarrow \infty} \phi_n(x), \ \ x \in X.$$
\end{defn}

In particular a semi-continuous function $f$ defined on $X$ and of polynomial growth satisfies $f \in {\mathcal{BP}}_1(X)$. Indeed, assume
$f$ is lower semi-continuous and 
$$ |f(x)| \leq \rho(x) := M(1+ \| x \|^2)^N, x \in X,$$
for some constants $M>0, N\geq 0$. There exists a monotonically increasing sequence of continuous functions $\phi_n$ converging pointwisely to $f$.
Then the sequence $\phi'_n = \min(\phi_n, \rho)$ is uniformly bounded from above by the weight $\rho$ and converges poinwisely to $f$. Similarly,
the operation $\phi_n = \max(\phi'_n, - 2\rho)$ provides a lower bound of polynomial decay at infinity, without altering the convergence behavior.

\begin{thm} Let $X \subset \RR^d$ be a closed subset and let $\mu$ be an admissible measure on $X$.
$\mu$ is $X$-indeterminate if and only if $\mu$ has a non-polynomial separating function $f \in {\mathcal{BP}}_1(X)$.
\end{thm}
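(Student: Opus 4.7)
The forward implication is immediate: by Theorem~\ref{basic}, $X$-indeterminateness of $\mu$ produces a separating function $\phi \in C_p(X) \setminus \RR[X]$, and $C_p(X) \subseteq \mathcal{BP}_1(X)$. For the non-trivial converse I plan to argue by contrapositive, showing that $X$-determinateness of $\mu$ forces $\alpha_f := \sup_{p \leq f} L(p) = \inf_{q \geq f} L(q) =: \beta_f$ for every $f \in \mathcal{BP}_1(X)$.

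Given a defining sequence $\phi_n \to f$ pointwise on $X$ with $|\phi_n| \leq \rho$ for a common polynomial $\rho$, I would pass to the semi-continuous envelopes $L_N := \inf_{n \geq N} \phi_n$ (upper semi-continuous) and $U_N := \sup_{n \geq N} \phi_n$ (lower semi-continuous), both of polynomial growth, satisfying $L_N \leq f \leq U_N$, $L_N \uparrow f$, and $U_N \downarrow f$. The set inclusions $\{p \leq L_N\} \subseteq \{p \leq f\}$ and $\{q \geq U_N\} \subseteq \{q \geq f\}$ give $\alpha_{L_N} \leq \alpha_f$ and $\beta_f \leq \beta_{U_N}$, and monotone convergence against the polynomial majorant $\rho$ yields $\int L_N\, d\mu \uparrow \int f\, d\mu$ and $\int U_N\, d\mu \downarrow \int f\, d\mu$. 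Hence the whole matter reduces to an auxiliary lemma: under determinateness of $\mu$, $\alpha_g = \beta_g = \int g\, d\mu$ for every semi-continuous $g$ of polynomial growth.

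For USC $g$ the bound $\beta_g \leq \int g\, d\mu$ comes essentially for free: take a decreasing sequence $\psi_n \downarrow g$ of continuous $\psi_n$ of polynomial growth (extractable by a standard min-truncation against the polynomial majorant), and apply Theorem~\ref{basic} to each continuous $\psi_n$ to obtain $\beta_g \leq \beta_{\psi_n} = \int \psi_n\, d\mu \to \int g\, d\mu$; combined with the automatic $\beta_g \geq \int g\, d\mu$ this closes the USC case, and the matching $\alpha_g = \int g\, d\mu$ follows by duality from the LSC statement applied to $-g$. The direct $\alpha_g = \int g\, d\mu$ for LSC $g$ is analogous via $\psi_n \uparrow g$.

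The main obstacle I anticipate is the companion equality $\beta_g = \int g\, d\mu$ for LSC $g$: the naive monotone sequence of continuous approximants runs in the wrong direction. My plan is to interpose a continuous upper bound -- for each $\varepsilon > 0$, construct a continuous $\tilde h \geq g$ of polynomial growth with $\int \tilde h\, d\mu \leq \int g\, d\mu + \varepsilon$, a Vitali--Carath\'eodory-style regularization relying on the inner regularity of the Radon measure $\mu$ together with Urysohn--Tietze interpolation on the normal closed set $X \subseteq \RR^d$ -- and then apply Theorem~\ref{basic} to $\tilde h$ to produce a polynomial $q \geq \tilde h \geq g$ with $L(q) \leq \int g\, d\mu + 2\varepsilon$. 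Controlling this upper approximation against the polynomial weight on the non-compact set $X$ is where I expect the care to concentrate; once it is in hand, the sandwich outlined above delivers $\alpha_f = \beta_f$ and the contrapositive closes.
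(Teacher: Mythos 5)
Your forward implication matches the paper's and is fine. The converse is where the proposal breaks down, and the failure is not repairable along the route you chose. The reduction you aim for --- that determinateness of $\mu$ forces $\alpha_f=\beta_f$ for \emph{every} $f\in\mathcal{BP}_1(X)$ --- is false. Take $X=[0,1]$, $\mu$ Lebesgue measure (determinate, being compactly supported), and $f=\chi_C$ for a fat Cantor set $C\subset[0,1]$ with $\lambda(C)=1/2$. Then $f$ is upper semicontinuous, hence lies in $\mathcal{BP}_1(X)$, and is non-polynomial; any polynomial $p\le f$ is $\le 0$ on the dense set $[0,1]\setminus C$ and hence everywhere, so $\alpha_f=0$, while any polynomial $q\ge f$ satisfies $\int_0^1 q\ge\lambda(C)$, so $\beta_f\ge 1/2$. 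The same example kills the step you yourself flag as the main obstacle: for the lower semicontinuous $g=\chi_{[0,1]\setminus C}$ there is \emph{no} continuous $\tilde h\ge g$ with $\int\tilde h\,d\mu$ close to $\int g\,d\mu$, because continuity forces $\tilde h\ge 1$ on all of $[0,1]$, giving a gap of $\lambda(C)$. Vitali--Carath\'eodory supplies a lower semicontinuous majorant, not a continuous one, and that distinction is exactly where the plan dies. Note moreover that \emph{both} halves of your sandwich consume the hard case: $L_N$ is USC, so you need $\alpha_{L_N}=\int L_N\,d\mu$, and $U_N$ is LSC, so you need $\beta_{U_N}=\int U_N\,d\mu$ --- precisely the two false statements; the ``easy'' cases you do establish are not the ones your argument uses.

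The paper's proof takes a different route that never touches the variational gap of $f$ itself. It uses dominated convergence (uniqueness of the Daniell extension from $C_p(X)$ to $\mathcal{BP}_1(X)$) to show that $f$ takes different values on two moment-equivalent measures $\mu,\nu$ if and only if one of its continuous approximants $\phi_n$ does; the converse then runs through ``$\int f\,d\mu\ne\int f\,d\nu$ for some $\nu\sim\mu$,'' and only the easy implication from that statement to condition (\ref{separating}) is invoked (both integrals lie in $[\alpha_f,\beta_f]$). Your obstruction is genuine and in fact shows that the purely variational reading of ``separating'' cannot be inverted within Baire class one: the phenomenon the paper illustrates with $\chi_{\QQ}$ already occurs for USC indicators of nowhere dense sets of positive measure. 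Any correct proof of the converse must therefore interpret a separating $f\in\mathcal{BP}_1(X)$ as one that distinguishes two moment-equivalent measures, as the paper implicitly does, rather than as one with $\alpha_f<\beta_f$.
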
 

\begin{proof} The essence of the construction of Daniell's integral is: a linear, positive functional on $C_p(X)$ admits a {\it unique} extension to a linear, positive functional on ${\mathcal{BP}}_1(X)$. Indeed, given the admissible Radon measure $\mu$, if a sequence of continuous functions $\phi_n$ converges to $f$ as stated in (\ref{pointwise}), then Lebesgue dominated convergence theorem
implies $\int f d\mu = \lim_n \int \phi_n d\mu$. A detailed analysis of this uniqueness of extension phenomenon is contained in  \cite{Cotlar,Metzler-2}.

In other terms, two admissible measures $\mu, \nu$, moment equivalent on $X$, are separated by the function $f$ above:
$$ \int f d\mu \neq \int f d\nu$$ if and only if $$\int \phi_n d\mu \neq \int \phi_n d\nu,$$ for $n$ large. Translating this observation into M. Riesz' framework, we infer: $\mu$ is $X$-indeterminate if and only if there exists a non-polynomial function $f \in {\mathcal{BP}}_1(X)$ subject to:
$$ \sup_{p \leq f} \int p d\mu < \inf_{q \geq f} \int q d\mu,$$
where $p, q$ are polynomials, and the inequalities are restricted to points of $X$.
\end{proof}

Note that the uniqueness of extension of a positive linear functional does not extend beyond Baire first class. Indeed, Dirichlet's characteristic function
$\chi = \chi_{\QQ}$ of rational numbers in the interval $[0,1]$ yields, for Riemann's integral:
$$ \sup_{p \leq \chi} \int_0^1 p(x) dx \leq 0 < 1 \leq \inf_{q \geq \chi} \int_0^1 q(x)  dx$$ 
and the measure $dx$ is determined on $[0,1]$.

A simple application of this extension of the field of test, separating function is given by the multivariate distribution function associated to a measure.
To this aim, for a point $a \in \RR^d$, we define the semi-bounded orthant
$$ \Pi_a = \{ x \in \RR^d: \ x_j \leq a_j \quad {\rm for} \quad j=1, \ldots ,d\}.$$
We denote by $\chi_a = \chi_{\Pi_a}$ the characteristic function of this set.

\begin{cor} Let $X \subset \RR^d$ be a closed subset and let $\mu$ be an admissible measure on $X$. 
The measure $\mu$ is $X$-indeterminate, if and only if there exists $a \in \RR^d$ such that
$$ \sup_{\stackrel{p(x) \leq \chi_a(x)}{ x \in X}} \int p d\mu < \inf_{\stackrel{q(x) \geq \chi_a(x)}{x \in X}} \int q d\mu.$$
\end{cor}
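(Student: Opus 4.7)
\emph{Proof plan.} My strategy is to apply the preceding discontinuous-separating-function theorem with the specific choice $f = \chi_a$. The first task is therefore to check that $\chi_a \in \mathcal{BP}_1(X)$. Since $\Pi_a$ is closed in $\RR^d$, the function $\chi_a$ is bounded and upper semi-continuous; concretely, it is the pointwise decreasing limit of the continuous cutoffs $f_n(x) = \max\bigl(0, 1 - n \cdot \mathrm{dist}(x, \Pi_a)\bigr)$, which are uniformly bounded by $1$. The sufficiency direction is then immediate: a non-degenerate separation interval for $\chi_a$ supplies the hypothesis of the preceding theorem (note $\chi_a$ cannot be a polynomial on $X$ when the interval has positive length) and forces $\mu$ to be $X$-indeterminate.

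For the harder necessity direction, I would start from a pair of distinct moment equivalent admissible measures $\mu \sim_X \nu$ and produce an orthant $\Pi_a$ with $\int \chi_a d\mu \neq \int \chi_a d\nu$. Once this is achieved, the strict inequality drops out of a three-line sandwich: any polynomial $p \leq \chi_a$ on $X$ satisfies $\int p d\mu = \int p d\nu \leq \int \chi_a d\nu$, and dually $\int \chi_a d\mu \leq \int q d\mu$ for every polynomial $q \geq \chi_a$ on $X$, so both distinct values $\int \chi_a d\mu$ and $\int \chi_a d\nu$ lie inside the separation interval, which is therefore non-degenerate.

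To produce the separating orthant I would invoke the classical fact that a finite Borel measure on $\RR^d$ is determined by its distribution function $a \mapsto \mu(\Pi_a)$. The family $\{\Pi_a : a \in \RR^d\}$ is a $\pi$-system under intersection, since $\Pi_a \cap \Pi_b = \Pi_{a \wedge b}$ with componentwise minimum. Every coordinate half-space is a countable increasing union of genuine orthants, e.g.\ $\{x_j \leq c\} = \bigcup_{n \in \NN} \Pi_{(n, \ldots, n, c, n, \ldots, n)}$ with $c$ in the $j$-th slot, so this $\pi$-system generates the full Borel $\sigma$-algebra of $\RR^d$; Dynkin's $\pi$-$\lambda$ theorem then forces $\mu = \nu$ whenever $\mu(\Pi_a) = \nu(\Pi_a)$ for every $a$, a contradiction. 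The only delicate point, and the main obstacle, is precisely this generation step: one must take care to use only orthants with $a \in \RR^d$ rather than allow $+\infty$ coordinates, which is why the countable-union trick is needed. Everything else reduces to the already-established theorem.
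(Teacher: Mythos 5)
Your proposal is correct and follows essentially the same route as the paper: apply the preceding $\mathcal{BP}_1$ theorem to $f=\chi_a$, and reduce the necessity direction to the fact that the values $\mu(\Pi_a)$, $a\in\RR^d$, determine a finite Radon measure. The only difference is that the paper cites this uniqueness fact (via half-open rectangles) from the literature, whereas you prove it directly with a $\pi$-$\lambda$ argument on the orthants; both the upper semi-continuity check for $\chi_a\in\mathcal{BP}_1(X)$ and the sandwich argument placing the two distinct values $\int\chi_a\,d\mu\neq\int\chi_a\,d\nu$ inside the separation interval are sound.
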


\begin{proof} The linear subspace of ${\mathcal{BP}}_1(X)$ contains the characteristic functions of all rectangles
of the form $(b_1, a_1] \times (b_2, a_2] \times \ldots \times (b_d, a_d]$. Moreover, it is well known that the associated step functions
are a set of uniqueness for any Radon measure on $X$, see, e.g., \cite{Cotlar}.
\end{proof}

%

Modifications of separating functions become more flexible in the class of Baire-1 functions. We state only a simple observation in this direction.

\begin{prop} Let $X \subset \RR^d$ be a closed subset and let $\mu$ be an admissible, $X$-indeterminate measure. Denote by $S \subset X$ the closed support of $\mu$.
Let $\phi \in C(X)$ be a continuous, separating function from the left, with respect to $\mu$, specifically:
$$ \inf_{\stackrel{p(x) \leq \phi(x)}{ x \in X}}  \int (\phi-p) d\mu > 0.$$
Let $\psi \in C(X)$ be a continuous function satisfying $\psi(x) \leq \phi(x), \ x \in X \setminus S$.
Then the Baire-1 function 
$$ \Phi(x) = \left\{ \begin{array}{l}
                          \phi(x), \ x \in S,\\
                          \psi(x), \ x \in X \setminus S,
                          \end{array} \right. $$
is left separating for $\mu$.
                        
                          \end{prop}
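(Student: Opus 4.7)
The plan is to reduce the left-separating property for $\Phi$ directly to that of $\phi$ via two elementary observations: that $\Phi$ agrees with $\phi$ on the support $S$ of $\mu$, and that $\Phi \le \phi$ pointwise on all of $X$. But before any integral involving $\Phi$ makes sense, I first need to place $\Phi$ in the class $\mathcal{BP}_1(X)$, since the Daniell extension developed in the proof of the previous theorem uniquely extends $L$ precisely to that class.

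To verify $\Phi \in \mathcal{BP}_1(X)$, I would use that $S$ is closed in $X$, so $\chi_S$ is upper semi-continuous and admits a decreasing pointwise approximation $g_n \in C(X)$ with $0 \le g_n \le 1$. Writing
\[
\Phi \;=\; \psi + (\phi - \psi)\chi_S,
\]
the continuous functions $h_n := \psi + (\phi-\psi)g_n$ converge pointwise to $\Phi$, and the uniform envelope $|h_n| \le |\phi|+2|\psi|$ has polynomial growth since $\phi,\psi \in C_p(X)$. This places $\Phi$ in $\mathcal{BP}_1(X)$, so $\int \Phi\, d\mu$ is unambiguously defined.

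Next, since $\mu$ is supported on $S$ and $\Phi|_S = \phi|_S$, dominated convergence along $h_n$ (or equally directly from the support condition) gives
\[
\int_X \Phi \, d\mu \;=\; \int_S \phi \, d\mu \;=\; \int_X \phi \, d\mu.
\]
On the other hand, the hypothesis $\psi \le \phi$ on $X \setminus S$ combined with $\Phi = \phi$ on $S$ yields $\Phi \le \phi$ pointwise on all of $X$. Therefore every polynomial $p$ satisfying $p \le \Phi$ on $X$ automatically satisfies $p \le \phi$ on $X$, and
\[
\sup_{p \le \Phi} \int p \, d\mu \;\le\; \sup_{p \le \phi} \int p \, d\mu \;<\; \int \phi \, d\mu \;=\; \int \Phi \, d\mu,
\]
where the strict middle inequality is precisely the left-separating hypothesis on $\phi$. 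Rearranging this chain yields $\inf_{p \le \Phi} \int(\Phi - p)\, d\mu > 0$, which is exactly the left-separating property for $\Phi$.

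The substantive part of the argument is essentially bookkeeping in the first step: the replacement of $\phi$ by $\psi$ on $X\setminus S$ must be shown to produce a function of polynomial growth lying in the uniqueness domain of the Daniell extension. Once that is in place, the comparison is a one-line consequence of the two features of $\Phi$ that were built into its definition, namely $\Phi = \phi$ on $\mathrm{supp}\,\mu$ and $\Phi \le \phi$ globally.
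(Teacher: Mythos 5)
Your proof is correct and takes essentially the same route as the paper's: the two key observations — that $\Phi\le\phi$ globally forces every competitor $p\le\Phi$ to also satisfy $p\le\phi$, and that $\int\Phi\,d\mu=\int_S\phi\,d\mu=\int\phi\,d\mu$ because $\mu$ lives on $S$ — are exactly the paper's argument. The only cosmetic difference is in verifying $\Phi\in{\mathcal{BP}}_1(X)$: the paper simply notes that $\Phi$ is upper semicontinuous (using $\psi\le\phi$ off $S$), whereas you build an explicit approximating sequence from $\Phi=\psi+(\phi-\psi)\chi_S$; both are valid.
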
 
\begin{proof} Note that the function $\Phi$ is upper-semicontinuous, therefore $\Phi$ is Baire-1.
Let $p$ be a polynomial function satisfying  $p(x) \leq \Phi(x), \ x \in X$. Then $p(x) \leq \phi(x), \ x \in X$, hence
\begin{align*}
  \inf_{\stackrel{p(x) \leq \Phi(x)}{ x \in X}}  \int_X (\Phi-p) d\mu =& \;  \inf_{\stackrel{p(x) \leq \Phi(x)}{ x \in X}}  \int_S (\phi-p)  d\mu \\
  \geq& \;   \inf_{\stackrel{p(x) \leq \phi(x)}{ x \in X}}  \int (\phi-p) d\mu  > 0.
  \end{align*}
\end{proof}  

The restriction of separation from the left in the  statement is not essential, since either
$$    \inf_{\stackrel{p(x) \leq \phi(x)}{ x \in X}}  \int (\phi-p) d\mu > 0,$$ 
or
 $$ \inf_{\stackrel{q(x) \geq \phi(x)}{ x \in X}}  \int (q-\phi) d\mu > 0.$$ 
Moreover, adding a polynomial to a function of polynomial growth will not change its $\mu$ separating status. In particular, assuming $\phi \geq 0$ in the 
 statement above, one can take  $\psi = 0$.

We elaborate below the important case of characteristic functions of coordinate quadrants, seen as separating functions.
Suppose $h \in \RR[t]$ satisfies
\begin{equation}\label{semiaxis}
\begin{cases}
h(t) \geq 1 & {\rm for} \quad t \geq 0, \\
h(t) \geq 0 & {\rm for} \quad t \in \RR.
\end{cases}
\end{equation}
Such polynomials are usually obtained via quadrature formulas, see for instance \cite{Freud} Section II.3.

Let $Q = \{ (x_1, \ldots, x_d) \in \RR^d: x_j \geq 0 \quad { \rm for} \quad j=1,\ldots, d \}$ and consider
$$H(x_1, \ldots, x_d) = \prod_{j=1}^d h(x_j)$$
and notice that $H \geq \chi_Q$ on $\RR^d$, i.e.,
$$
\begin{cases}
H(x) \geq 1 & {\rm for} \quad x  \in Q, \\
H(x) \geq 0 & {\rm for} \quad x \in \RR^d \setminus Q.
\end{cases}
$$
 For a subset $\cI \subseteq \{ 1, \ldots, d \}$, we let
 $$Q_{\cI} = \{ (x_1, \ldots, x_d) \in \RR^d: \text{$x_j \geq 0$ for $j \notin \cI$ and $x_k \leq 0$ for $k \in \cI$}\}.$$
 In a similar fashion for a function $\varphi$, we let
 $$\varphi_{\cI}(x_1, \ldots, x_d)= \varphi(x_1, \ldots, x_{k-1}, -x_k, x_{k+1}, \ldots, x_d).$$
Note that, the polynomial $H_{\cI}$ satisfies
$$
\begin{cases}
H_{\cI}(x) \geq 1 & {\rm for} \quad x  \in Q_{\cI}, \\
H_{\cI}(x) \geq 0 & {\rm for} \quad x \in \RR^d \setminus Q_{\cI}.
\end{cases}
$$
and $H_1 := H_{ \{ 1 \} } =  \sum_{\cI \neq \emptyset} H_{\cI}$, where the sum is taken over all nonempty subsets of $\{ 1, \ldots, d \}$, satisfies
$$
\begin{cases}
H_{1}(x) \geq 1 & {\rm for} \quad x  \in \RR^d \setminus Q, \\
H_{1}(x) \geq 0 & {\rm for} \quad x \in Q.
\end{cases}
$$

Therefore, think of $\chi_Q$ as a separating function, i.e.,
$$1 - H_1 \leq \chi_Q \leq H.$$
Let $\mu$ be an admissible measure on $\RR^d$ with $\chi_Q$ as a separating function, i.e.,
$$\inf_{g \leq \chi_Q \leq f } \int (f -g) \, d\mu \geq \gamma >0,$$
where $f,g \in \RR[x_1, \ldots, x_d]$. We infer 
$$\int (H-(1-H_1) ) \, d\mu \geq \gamma,$$
i.e.,
$$\sum_{\cI \in \powerset\{ 1, \ldots, d\} } \int H_{\cI} d\mu - \mu(\mathbbm{1}) \geq \gamma > 0 ,$$
where $\mu(\mathbbm{1}) := \int \, d\mu$.

We can, of course, translate $Q$ via $Q \mapsto a+Q$, for $a \in \RR^d$, where 
$$a + Q := \{ y \in \RR^d: \text{$y = a+ (x_1, \ldots x_d)$ for some $(x_1, \ldots, x_d \in Q$}\},$$
which amounts to considering the function
$$(\tau_a \, \varphi)(x) = \varphi(x-a).$$

All in all, we can state the following indeterminateness necessary condition.

\begin{prop} Let $h(t)$ be a polynomial satisfying \ref{semiaxis} and let $H(x_1, \ldots, x_d) = \prod_{j=1}^d h(x_j)$.
Assume $\mu$ is an indeterminate, admissible measure defined on $\RR^d$. Then there exists $a \in \RR^d$ and a constant
$\gamma_a >0$, such that
$$\sum_{\cI \in \powerset \{1, \ldots, d \}} \int \tau_a(H_\cI) d\mu \geq \mu(\mathbbm{1}) + \gamma_a.$$
\end{prop}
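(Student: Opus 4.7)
The plan is to combine the corollary characterizing indeterminateness via orthant characteristic functions with the polynomial sandwich $1 - H_1 \leq \chi_Q \leq H$ assembled just above, after translating everything by a suitable vector $a$.

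First, I would upgrade that corollary so that it supplies an \emph{upper-right} orthant $a+Q$ rather than the lower-left orthant $\Pi_a$. Introduce the reflected measure $\mu^-(B) := \mu(-B)$, which is indeterminate exactly when $\mu$ is. The corollary, applied to $\mu^-$, yields $b \in \RR^d$ such that $\chi_{\Pi_b}$ separates $\mu^-$. The change of variables $x \mapsto -x$ converts $\chi_{\Pi_b}$ into $\chi_{a+Q}$ with $a = -b$, and sends polynomial competitors to polynomial competitors (since the map $p(x) \mapsto p(-x)$ is a bijection on $\RR[x_1,\ldots,x_d]$). Hence there is a positive gap
$$\gamma_a := \inf_{q \geq \chi_{a+Q}} \int q \, d\mu - \sup_{p \leq \chi_{a+Q}} \int p \, d\mu > 0.$$

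Second, translating the pointwise bounds $1 - H_1 \leq \chi_Q \leq H$ by $a$ and using $\tau_a \chi_Q = \chi_{a+Q}$ gives
$$1 - \tau_a H_1(x) \leq \chi_{a+Q}(x) \leq \tau_a H(x), \qquad x \in \RR^d,$$
where $\tau_a H$ and $1 - \tau_a H_1$ are polynomials in $x$. Inserting them as competitors in the variational problem defining $\gamma_a$ produces
$$\int \tau_a H \, d\mu - \int (1 - \tau_a H_1) \, d\mu \geq \gamma_a,$$
which rearranges to $\int \tau_a H \, d\mu + \int \tau_a H_1 \, d\mu \geq \mu(\mathbbm{1}) + \gamma_a$. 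Expanding $H_1 = \sum_{\cI \neq \emptyset} H_\cI$ and identifying $H$ with $H_\emptyset$ rewrites the left-hand side as $\sum_{\cI \in \powerset\{1,\ldots,d\}} \int \tau_a(H_\cI) \, d\mu$, which is the desired inequality.

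The main obstacle is the first step, since the corollary is phrased for lower-left orthants while $H$ is tailored to $\chi_Q$. The reflection $\mu \mapsto \mu^-$ bridges the two without revisiting the Daniell-type uniqueness of extension underlying the corollary; alternatively, one can rerun that argument directly, as the characteristic functions of orthants of the form $a+Q$ also form a uniqueness class among admissible Radon measures. Once this reorientation is secured, the rest is a routine bookkeeping with the sandwich and the decomposition $H_1 = \sum_{\cI \neq \emptyset} H_\cI$.
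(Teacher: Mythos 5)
Your proof is correct and follows essentially the same route as the paper, which obtains the inequality by inserting the translated sandwich $1 - \tau_a H_1 \leq \chi_{a+Q} \leq \tau_a H$ into the separation gap supplied by the orthant corollary of Section 3. Your reflection argument converting that corollary's lower-left orthants $\Pi_b$ into the upper-right orthants $a+Q$ (or, equivalently, rerunning the uniqueness-class argument for upper orthants) is a legitimate patch for an orientation detail the paper leaves implicit.
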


The condition in the statement is obviously open with respect to $a$.

\section{Poisson transform}

Arguably the closest multivariate integral transform to the classical Cauchy transform in 1D is the Poisson transform. We elaborate below a few details, with \cite{Stein-Weiss} as a basic reference. 

Let $P(x,t) = c_d \left(\frac{t}{[t^2 + \| x \|^2]^{\frac{d+1}{2}}} \right)$ be Poisson's kernel in $\RR^d$. We denote 
$$P_\mu(x,t) = \int_{\RR^d} P(x-u,t) d\mu(u), \ \ x \in \RR^d, t>0,$$
the Poisson transform of the admissible measure $\mu$. It is a harmonic function in the upper-half space of $\RR^{d+1}$, which determines $\mu$ by non-tangential limits ($ t\mapsto 0$) in the distribution sense. Accordingly, given an admissible, indeterminate measure $\mu$ on $\RR^d$, there exists a value $(x_0,t_0)$ with the property that the continuous function $u \mapsto P(x_0-u,t_0)$ is separating with respect to $\mu$. In analogy to the 1D situation, we prove that almost any pair $(x_0,t_0) \in \RR^d \times (0,\infty)$ has this property. 
Performing an averaging in the mean of the respective monotonic approximation we obtain an everywhere criterion of indeterminateness.

\begin{thm} Let $\mu$ be an admissible measure on $\RR^d$. The following are equivalent:

a) The measure $\mu$ is moment indeterminate;

b) There exists $(x_0,t_0) \in \RR^d \times (0,\infty)$ such that
\begin{equation}\label{Poisson-barrier}
\kappa_\mu(x_0,t_0) := \inf_{ p(u) \leq P(x_0-u,t_0) \leq q(u)} \int (q-p) d\mu > 0,
\end{equation}
where $p(u), q(u)$ are polynomial functions,

c) For any point $(x_0,t_0) \in \RR^d \times (0,\infty)$ and radius $0< r< t_0$:
$$ \int_{S((x_0,t_0),r)} \kappa_\mu(x,t) d\sigma(x,t) >0,
$$ where $S((x_0,t_0), r)$ denotes the sphere and $d\sigma$ is its surface area measure.
\end{thm}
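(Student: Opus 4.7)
I would proceed around the cycle $(c) \Rightarrow (b) \Rightarrow (a) \Rightarrow (c)$. The first implication is immediate: $\kappa_\mu \geq 0$ pointwise, so a strictly positive integral over a sphere forces strict positivity at some point. For the second, the function $u \mapsto P(x_0-u, t_0)$ is bounded and continuous on $\RR^d$---hence a member of $C_p(\RR^d)\setminus\RR[x]$---so the inequality in (\ref{Poisson-barrier}) is precisely Riesz' separation condition (\ref{separating}), and Theorem \ref{basic} delivers a distinct, moment equivalent measure.

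The content of the theorem lies in $(a) \Rightarrow (c)$. Assume $\mu$ is indeterminate and pick an admissible $\nu \sim \mu$ with $\nu \neq \mu$. Put $h(x,t) := P_\mu(x,t) - P_\nu(x,t)$, a harmonic function on the open upper half-space $\RR^d \times (0,\infty)$. Because the Poisson transform is injective on admissible measures (its non-tangential boundary distribution recovers the measure), $h \not\equiv 0$. Now fix $(x_0, t_0)$ and $0 < r < t_0$, so the closed ball $\overline{B} = \overline{B((x_0,t_0),r)}$ sits inside the half-space, together with the bounding sphere $S = \partial B$. If $h$ vanished identically on $S$, uniqueness for the Dirichlet problem on $\overline{B}$ would force $h \equiv 0$ on $\overline{B}$, and then real analyticity of harmonic functions combined with connectedness of the half-space would yield $h \equiv 0$ everywhere, contradicting $\mu \neq \nu$. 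Consequently the relatively open set $A := \{(x,t) \in S : h(x,t) \neq 0\}$ is non-empty, hence of strictly positive surface measure.

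To convert this into a bound on $\int_S \kappa_\mu\, d\sigma$, observe that for every $(x,t)$ in the upper half-space the functional $L$ sends both $P_\mu(x,t) = \int P(x-u,t)\, d\mu(u)$ and $P_\nu(x,t) = \int P(x-u,t)\, d\nu(u)$ into the Riesz interval $\bigl[\sup_{p \leq P(x-\cdot,t)} L(p),\, \inf_{q \geq P(x-\cdot,t)} L(q)\bigr]$, since $\mu \sim \nu$; this yields the pointwise lower bound $\kappa_\mu(x,t) \geq |h(x,t)|$. As $|h|$ is continuous and strictly positive on the open set $A$,
$$\int_S \kappa_\mu(x,t)\, d\sigma(x,t) \;\geq\; \int_A |h(x,t)|\, d\sigma(x,t) \;>\; 0,$$
closing the cycle.

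The principal obstacle is precisely the unique continuation step: promoting the pointwise separation guaranteed by Theorem \ref{basic} into a statement valid on every admissible sphere. This is where the rigidity of harmonic functions (Dirichlet uniqueness together with real analyticity on a connected domain) enters critically, and it is the multivariate analogue of the averaging-in-the-mean argument alluded to in the preamble to the theorem. A minor technical point---the measurability of $\kappa_\mu$ on $S$---can be sidestepped by interpreting $\int_S \kappa_\mu\, d\sigma$ as a lower Lebesgue integral, which is legitimized by the continuous minorant $|h|$.
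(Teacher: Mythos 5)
Your proposal is correct and follows essentially the same route as the paper: both reduce $(a)\Rightarrow(c)$ to the fact that the nontrivial harmonic function $P_\mu-P_\nu$ cannot vanish on an entire sphere (maximum principle plus real analyticity), so $\kappa_\mu$ is positive on a nonempty relatively open subset of every admissible sphere. Your explicit pointwise minorant $\kappa_\mu \geq |P_\mu - P_\nu|$ is a small but genuine refinement over the paper, which instead asserts Borel measurability of $\kappa_\mu$ without proof; your continuous minorant sidesteps that issue cleanly.
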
 

\begin{proof} The equivalence between a) and b) follows from the fact that the Poisson transform $P_\mu$ determines $\mu$.
It remains to prove that a) implies c). Assume that the measure $\mu$ is indeterminate, that is there exists a moment equaivalent, admissible measure $\nu$, different than $\mu$. 

Suppose  that there exists $(x_0,t_0) \in \RR^d \times (0,\infty)$ such that
$$ \inf_{ p(u) \leq P(x_0-u,t_0) \leq q(u)} \int (q-p) d\mu = 0.$$ Then
$$ P_\mu(x_0,t_0) = P_\nu(x_0,t_0).$$
The harmonic function $u(x,t) = P_\mu(x,t)-P_\nu(x,t)$ is not identically zero, but vanishes at the point $(x_0,t_0)$. 
The zero set $V$ of $u(x,t)$ is a real analytic hypersurface of $\RR^d \times (0,\infty)$, which by the maximum principle cannot contain any euclidean sphere.

Note that the function $\kappa_\mu$ is Borel measurable. Its zero set is included in $V$, hence $\kappa_\mu >0$ on an open subset of any euclidean sphere, in particular  $S((x_0,t_0), r)$. So, its average on any sphere is non-zero if and only if the measure $\mu$ is indeterminate.
\end{proof}

\begin{cor} An admissible measure $\mu$ is indeterminate on $\RR^d$ if and only if
\begin{equation}
\int_{S((0,1), r)} \kappa_\mu(x,t) d\sigma(x,t) > 0
\end{equation} for some $0< r< 1$.
\end{cor}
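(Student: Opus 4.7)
The plan is to read the corollary as a direct specialization of the preceding theorem, noting that the ``only if'' direction requires just one instance of condition (c), while the ``if'' direction is recovered from condition (b).

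First, I would establish the forward implication. Suppose $\mu$ is indeterminate on $\RR^d$. Then part (c) of the theorem, which I am free to invoke, applies with any base point and any admissible radius. Specializing to $(x_0,t_0)=(0,1)$, every radius $0<r<1$ is admissible, and hence
\[
\int_{S((0,1),r)} \kappa_\mu(x,t)\,d\sigma(x,t) > 0
\]
for every such $r$, in particular for some $r$. This gives one direction.

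For the reverse implication, suppose the displayed integral is strictly positive for some $0<r<1$. Since $\kappa_\mu$ is nonnegative (by its very definition as an infimum of differences $\int(q-p)\,d\mu$ with $q\geq p$ on the support of $\mu$) and Borel measurable on $\RR^d\times(0,\infty)$, the positivity of its integral on $S((0,1),r)$ forces the existence of at least one point $(x_0,t_0)\in S((0,1),r)$ where $\kappa_\mu(x_0,t_0)>0$. But $(x_0,t_0)\in\RR^d\times(0,\infty)$ since $r<1$, so condition (b) of the theorem is satisfied at that point, and therefore $\mu$ is moment indeterminate.

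There is essentially no obstacle beyond citing the theorem correctly; the only small point to verify cleanly is that $\kappa_\mu\geq 0$ (so that positivity of its spherical average entails pointwise positivity somewhere) and that the sphere $S((0,1),r)$ lies in the domain $\RR^d\times(0,\infty)$ whenever $r<1$, which is why the range $0<r<1$ is the correct one.
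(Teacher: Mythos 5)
Your proposal is correct and is exactly the intended argument: the paper states this corollary without proof as an immediate specialization of the preceding theorem, with the forward direction from part (c) at the point $(0,1)$ and the converse from the nonnegativity and measurability of $\kappa_\mu$ together with part (b). Nothing further is needed.
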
 

Given an admissible measure $\mu$ it is natural to consider the set of Poisson transforms of equivalent measures:
$$ \Delta_\mu(x,t) = \{ P_\nu(x,t): \ \ \nu \sim \mu\}.$$
This is a closed, convex set, hence an interval. According to the proof above,  $\Delta_\mu(x,t)$ reduces to a point if and only if $\kappa_\mu(x,t) =0$.

\begin{lem} The length of the interval $\Delta_\mu(x,t)$ is $\kappa_\mu(x,t)$.
\end{lem}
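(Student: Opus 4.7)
The plan is to identify $\Delta_\mu(x,t)$ with the Riesz interval
$$I := \Bigl[\sup_{p \leq \phi} L(p), \; \inf_{q \geq \phi} L(q)\Bigr],$$
where $\phi(u) := P(x-u, t)$, $L(r) := \int r\, d\mu$ is the positive functional on $\RR[u]$ represented by $\mu$, and the sup and inf range over polynomial functions $p,q \in \RR[u]$. Since $\kappa_\mu(x,t) = \inf_{q \geq \phi} L(q) - \sup_{p \leq \phi} L(p)$ is precisely the length of $I$, the lemma reduces to showing $\Delta_\mu(x,t) = I$. I would first observe that as a function of $u$, the Poisson kernel $\phi$ is continuous and uniformly bounded (by $c_d/t^d$), hence $\phi \in C_p(\RR^d)$, so the machinery behind Theorem \ref{basic} applies without modification.

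For the inclusion $\Delta_\mu(x,t) \subseteq I$, I would take any $\nu \sim \mu$ and any polynomials $p \leq \phi \leq q$ on $\RR^d$. Integrating against $\nu$ and using moment equivalence gives
$$L(p) = \int p \, d\nu \leq \int \phi \, d\nu = P_\nu(x,t) \leq \int q \, d\nu = L(q),$$
so taking supremum in $p$ and infimum in $q$ yields $P_\nu(x,t) \in I$.

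For the reverse inclusion $I \subseteq \Delta_\mu(x,t)$, I would invoke M. Riesz' extension lemma exactly in the form used to sketch Theorem \ref{basic}. Fix any $t_0 \in I$ and define
$$\tilde L(p + \lambda \phi) := L(p) + \lambda t_0, \qquad p \in \RR[u], \; \lambda \in \RR,$$
on the vector space $\RR[u] + \RR\phi$. This extension is positive: if $\lambda > 0$ and $p + \lambda \phi \geq 0$ on $\RR^d$, then $\phi \geq -p/\lambda$, so $t_0 \geq \inf_{q \geq \phi} L(q) \geq L(-p/\lambda) = -L(p)/\lambda$, and hence $\tilde L(p+\lambda\phi) \geq 0$; the case $\lambda < 0$ is symmetric via the $\sup$ defining the left endpoint of $I$; the case $\lambda = 0$ is just positivity of $L$. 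By Zorn's lemma $\tilde L$ extends to a positive functional on $C_p(\RR^d)$, which by F.~Riesz representation corresponds to an admissible Radon measure $\nu$. By construction $\nu$ agrees with $\mu$ on polynomials (so $\nu \sim \mu$) and $P_\nu(x,t) = \tilde L(\phi) = t_0$. Thus every $t_0 \in I$ belongs to $\Delta_\mu(x,t)$.

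The only delicate point, and what I would expect to be the main obstacle on careful reading, is verifying the positivity of the intermediate functional $\tilde L$ on $\RR[u] + \RR\phi$; but this is built into the very definition of $I$ and is the technical heart already used in the proof sketch following Theorem \ref{basic}. Combining the two inclusions gives $\Delta_\mu(x,t) = I$, whose length equals $\kappa_\mu(x,t)$, as required.
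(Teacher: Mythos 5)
Your proposal is correct and follows essentially the same route as the paper, whose entire proof is the one-line remark that Riesz' extension theorem (Theorem \ref{basic}) lets one populate the interval $\bigl[\sup_{p\leq P(x-\cdot,t)}\int p\,d\mu,\ \inf_{q\geq P(x-\cdot,t)}\int q\,d\mu\bigr]$ with values $\int P(x-u,t)\,d\nu(u)$ for $\nu\sim\mu$; you simply fill in both inclusions explicitly. One small slip: in the positivity check for $\lambda>0$ you should use the left endpoint, namely $p+\lambda\phi\geq 0$ gives $-p/\lambda\leq\phi$, so $L(-p/\lambda)\leq\sup_{r\leq\phi}L(r)\leq t_0$ (your chain $t_0\geq\inf_{q\geq\phi}L(q)$ is backwards), with the $\inf$ endpoint serving the $\lambda<0$ case instead.
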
 

\begin{proof} Riesz' extension theorem (see Theorem \ref{basic}) shows that we can populate the interval
$$  [\sup_{ p(u) \leq P(x-u,t)} \int p d\mu, \inf_{ q(u) \geq P(x-u,t)} \int q d\mu]$$
by values $\int P(x-u,t) d\nu(u)$, where $\nu$ is a measure, moment equivalent to the prescribed measure $\mu$.

\end{proof} 

We have just proved that either $\Delta_\mu(x,t)$ consists of a single point for every $(x,t) \in \RR^d \times (0,\infty)$, or this happens only on an exceptional locus contained in the zero set of a non-trivial harmonic function.

\begin{rem} Notes on the exceptional set $E= \{ (x,t), \ \kappa_\mu(x,t) =0 \}$ attached to an indeterminate, admissible measure $\mu$ on $\RR^d$.

If $\nu$ is a moment equivalent measure to $\mu$, then $E$ is contained in the zero set of the harmonic function $P_\mu-P_\nu$, in itself a real analytic subset of $\RR^d \times (0,\infty)$. On the other hand, if $\kappa_\mu(x,t) \neq 0$, then Riesz' extension theorem implies that there exists a moment equivalent measure $\nu$, such that $P_\mu(x,t) \neq P_\nu(x,t).$ In other terms,
$$ E = \bigcap_{\nu  \sim \mu} \{ (x,t): \ P_\mu(x,t) = P_\nu(x,t) \}.$$
But an arbitrary intersection or real analytic sets is real analytic, see for instance Cor. 2 on pg. 100 in \cite{Narasimhan}. All in all we have proved the following statement.

\begin{prop} Let $\mu$ be an indeterminate, admissible measure on $\RR^d$. Then the exceptional set $E= \{ (x,t), \ \kappa_\mu(x,t) =0 \}$ is (a proper, closed) real analytic subset of $\RR^d \times (0,\infty)$.
\end{prop}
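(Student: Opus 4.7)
The plan is to read off the proof directly from the preceding remark, which already assembles the three main ingredients: the characterization of $E$ via Riesz' extension theorem, the real analyticity of the zero set of each harmonic difference $P_\mu-P_\nu$, and the stability of real analyticity under arbitrary intersection.

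First I would establish the set-theoretic identity
\[
E = \bigcap_{\nu \sim \mu} \{(x,t) \in \RR^d \times (0,\infty) : P_\mu(x,t) = P_\nu(x,t)\}.
\]
The inclusion $\subseteq$ is immediate from the lemma: if $\kappa_\mu(x,t) = 0$ then $\Delta_\mu(x,t)$ is a singleton, so $P_\nu(x,t) = P_\mu(x,t)$ for every equivalent $\nu$. For the reverse inclusion, if $\kappa_\mu(x_0,t_0) > 0$, then by Riesz' extension theorem (Theorem \ref{basic}), applied to the separating continuous function $u \mapsto P(x_0-u,t_0)$, one can realize any value in the open interval $\bigl(\sup_{p \leq P(x_0-\cdot,t_0)} \int p\, d\mu, \inf_{q \geq P(x_0-\cdot,t_0)} \int q\, d\mu\bigr)$ as $P_\nu(x_0,t_0)$ for some admissible $\nu \sim \mu$, so there exists $\nu \sim \mu$ with $P_\nu(x_0,t_0) \neq P_\mu(x_0,t_0)$.

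Next I would observe that each function $P_\mu - P_\nu$ is harmonic, hence real analytic, on the open upper half-space $\RR^d \times (0,\infty)$; consequently its zero set is a closed real analytic subset. Because $\mu$ is indeterminate, we can pick at least one $\nu \sim \mu$ with $\nu \neq \mu$; since the Poisson transform is injective on admissible measures (via its distributional boundary values), $P_\mu - P_\nu$ is not identically zero, so its zero set is a \emph{proper} closed real analytic subset. The set $E$ is contained in this proper subset, showing in particular that $E$ is proper and closed.

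Finally I would invoke the cited result (Narasimhan \cite{Narasimhan}, Cor. 2, p. 100) that an arbitrary intersection of real analytic subsets of a real analytic manifold is itself real analytic, applied to the family of zero sets indexed by $\nu \sim \mu$. This promotes $E$ from a mere intersection of closed analytic sets to a real analytic set. I do not foresee a serious obstacle here: the only subtle point is the appeal to Narasimhan's coherence-type stability of analyticity under uncountable intersections, and the existence of at least one witness $\nu \neq \mu$ establishing properness, both of which are handed to us by hypothesis.
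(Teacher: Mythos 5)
Your proposal is correct and follows essentially the same route as the paper: the identity $E=\bigcap_{\nu\sim\mu}\{P_\mu=P_\nu\}$ obtained from the lemma on $\Delta_\mu(x,t)$ and Riesz' extension theorem, real analyticity of each harmonic zero set, properness from the existence of one witness $\nu\neq\mu$ together with injectivity of the Poisson transform, and the appeal to Narasimhan for arbitrary intersections of real analytic sets. No gaps to report.
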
 

In particular, the complement of $E$ into $\RR^d \times (0,\infty)$ cannot have relatively compact connected components. Indeed, otherwise for any measure $\nu$, moment equivalent to $\mu$, the harmonic function $P_\mu(x,t) \neq P_\nu(x,t)$ would be identiucally zero on that component, and hence everywhere. A little more can be said in this respect.

\begin{prop} Let $\delta>0$ and let $H$ denote an irreducible, real analytic set of real dimension $d$, contained in $\RR^d \times (\delta,\infty)$. Let $E$ denote the exceptional set attached to an indeterminate measure on $\RR^d$. The germ of $E$ at any point cannot contain the germ of $H$.
\end{prop}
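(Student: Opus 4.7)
The plan is to argue by contradiction. Suppose $p \in H$ is a point at which the germ of $E$ contains the germ of $H$. Recalling from the preceding discussion that
$$ E = \bigcap_{\nu \sim \mu} \{(x,t) \in \RR^d \times (0,\infty): \ P_{\mu}(x,t) = P_{\nu}(x,t)\}, $$
this hypothesis forces every real analytic function $u_{\nu} := P_{\mu} - P_{\nu}$ to vanish on a relative neighborhood of $p$ in $H$. By the identity principle applied on the regular locus $H_{\mathrm{reg}}$ (irreducibility of $H$ guarantees connectedness of $H_{\mathrm{reg}}$, and real analytic extension then propagates across the singular strata), $u_{\nu} \equiv 0$ on all of $H$, for every admissible $\nu$ moment equivalent to $\mu$.

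The crux of the plan is then to show that no nonzero harmonic function of the form $P_{\sigma}$ on $\RR^d \times (0,\infty)$ — where $\sigma = \mu - \nu$ is a finite signed measure on $\RR^d$ with all polynomial moments equal to zero — can vanish on a real analytic hypersurface $H \subset \RR^d \times (\delta,\infty)$. My approach is to apply the Schwarz-type reflection principle for harmonic functions across a real analytic (hence, since $\Delta$ is elliptic, non-characteristic) hypersurface: near a regular point of $H$ one has the anti-symmetry $P_{\sigma}(\rho(z)) = -P_{\sigma}(z)$, where $\rho$ is the real analytic involution of an ambient neighborhood fixing $H$. Real analytic continuation then propagates this relation away from $H$; because $H$ is bounded away from $\{t=0\}$ by $\delta$, iterating the reflection through a chain of auxiliary real analytic hypersurfaces should transport the anti-symmetry all the way to the boundary stratum.

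To close the argument, I would pass to the distributional trace at $t = 0^{+}$, which equals $\sigma$, and combine the induced functional equation with the Fourier-analytic representation $\widehat{P_{\sigma}}(\xi,t) = e^{-t|\xi|}\widehat{\sigma}(\xi)$. The key analytic input is that the vanishing of all polynomial moments of $\sigma$ is equivalent to the infinite-order flatness of $\widehat{\sigma}$ at the origin. In the flat model $H = \{t = t_{0}\}$ this step reduces to the elementary observation that $P_{\sigma}(\,\cdot\,,t_{0}) \equiv 0$ forces $e^{-t_{0}|\xi|}\widehat{\sigma}(\xi) \equiv 0$, hence $\sigma \equiv 0$; the curved case should yield the same conclusion from a more involved functional equation, giving $\nu = \mu$ for every $\nu \sim \mu$ and contradicting indeterminateness.

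The hardest step will be the propagation from local to global: translating the reflection $\rho$ (which is genuinely non-linear when $H$ is curved) into a constraint reaching the boundary $\{t=0\}$, and exploiting the infinite-order flatness of $\widehat{\sigma}$ at the origin inside the resulting mixed functional equation. If the direct reflection route does not close cleanly, my backup is to bypass reflections altogether and instead apply the variational principle of Theorem~\ref{basic} directly: produce a $\nu \sim \mu$ with $u_{\nu}(x_{0},t_{0}) \neq 0$ at some $(x_{0}, t_{0}) \in H$ near $p$ by exhibiting a separating function $u \mapsto P(x_{0}-u, t_{0})$, essentially an averaging variant of the spherical-integral criterion in Section~4 transported onto $H$ itself rather than its complement.
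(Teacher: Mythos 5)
Your opening reduction (germ containment plus irreducibility forces $u_\nu=P_\mu-P_\nu$ to vanish on all of $H$, for every $\nu\sim\mu$) matches the paper, and your identification of the crux --- that a function of the form $P_\sigma$ cannot vanish on such an $H$ unless $\sigma=0$ --- is correct. But the mechanism you propose for the crux does not work, and the step you yourself flag as hardest is exactly where it breaks. The point-to-point anti-symmetry $P_\sigma(\rho(z))=-P_\sigma(z)$ across a real analytic hypersurface is \emph{false} for harmonic functions in ambient dimension $d+1\geq 3$ except when $H$ is (a piece of) a hyperplane or a sphere; for general curved analytic hypersurfaces the Schwarz reflection is not given by any involution of a neighborhood, so there is no relation to ``propagate by analytic continuation,'' and the subsequent chain of reflections down to $\{t=0\}$ and the resulting ``functional equation'' for $\widehat{\sigma}$ have no content to build on. Note also that $P_\sigma$ is already harmonic on the whole half-space, on both sides of $H$, so reflection buys you nothing as an extension device; and vanishing of a harmonic function on a hypersurface alone never forces global vanishing (consider $t-t_0$ on $\{t=t_0\}$), so some global input beyond the local geometry of $H$ is indispensable. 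Your flat-model computation is fine but is precisely the one case where the reflection heuristic is literally true, which is why it closes there and nowhere else.

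The paper's proof supplies the missing global input in a much more elementary way, and you never invoke it: one first shows that $P_\mu(x,t)\to 0$ \emph{uniformly} as $t^2+\|x\|^2\to\infty$ within the slab $\{t\geq\delta\}$ (split the defining integral over $\|u\|\geq M$ and $\|u\|\leq M$ and use admissibility of $\mu$). Since $H$ has real dimension $d$ and lies in that slab, for large $R$ the complement of $H\cup S((0,0),R)$ has a connected component that is relatively compact in $\RR^d\times(\delta,\infty)$; on its boundary the harmonic function $P_\mu-P_\nu$ is $0$ (on the part in $H$) and arbitrarily small (on the part in the sphere), so the maximum principle forces $P_\mu-P_\nu=0$ on that component, hence everywhere by real analyticity, contradicting $\mu\neq\nu$ via injectivity of the Poisson transform. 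If you want to salvage your write-up, replace the reflection machinery by this decay-plus-maximum-principle argument; your backup suggestion (re-running the variational principle on $H$) is not developed enough to assess and would in any case still need the decay estimate to control behavior at infinity.
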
 

\begin{proof} If the germ of $E$ contains that of $H$, then $E$ contains $H$ due to the irreducibility assumption. First we prove that
$$ \lim_{t^2 + \|x \|^2 \rightarrow \infty} P_\mu(x,t) = 0,$$ uniformly on the set $x \in \RR^d, t \geq \delta.$
Indeed, let $M$ be a positive number, so that
$$ \int_{\| u \| \geq M} \frac{t}{[t^2 + \| x -u \|^2]^{\frac{d+1}{2}}} d\mu(u) \leq [\frac{1}{t^{\frac{d+1}{2}}}] \int_{\| u \| \geq M} d\mu(u)$$
can be made arbitrary small whenever $t \geq \delta$ and $M$ tends to infinity.
On the other hand, if $t^2 + \|x\|^2 = R^2$, then
\begin{align*}
&\;\; \int_{\| u \| \leq  M} \frac{t}{[t^2 + \| x -u \|^2]^{\frac{d+1}{2}}} d\mu(u) \\
\leq& \;   \int_{\| u \| \leq  M} \frac{R}{[t^2 + \|x\|^2 - 2 \langle x, u \rangle + \|u \|^2]^{\frac{d+1}{2}}}d\mu(u) \\\
\leq& \;
\int_{\| u \| \leq  M} \frac{R}{[R^2 - 2 R M]^{\frac{d+1}{2}}} d\mu(u)
\end{align*}
and the latter integral converges uniformly to zero for $R \rightarrow \infty$.

Assume $H \subset E$ and let $\nu$ be another admissible measure, moment equivalent to $\mu$. For $R>0$ large, the complement of $H \cup S((0,0), R)$ contains a connected component, relatively compact in $\RR^d \times (\delta,\infty)$. The harmonic function $P_\mu-P_\nu$ vanishes on $E$, hence on $H$, and it is arbitary small on the sphere. The maximum principle implies $P_\mu = P_\nu$ everywhere, a contradiction.
\end{proof}

\end{rem}

\section{Preservers of indeterminate measures}

\subsection{Push forward by projections}
Both Fourier transform and Fanttapi\`e transform criteria, respectively Theorems \ref{Fourier} and \ref{CONE}, contain a separating function
of the form $x \mapsto h(\xi \cdot x)$ associated to a privileged vector $\xi \in \RR^d$.  Consequently, an orthogonal push forward measure on linear varieties $V$ containing $\xi$ will preserve the indeterminateness.

Indeed, let $\pi: \RR^d \longrightarrow V$ be the orthogonal projection and assume $\mu$ is an admissible measure on $\RR^2$, respectively supported by a closed cone
$\Gamma^\ast$. Assume, on the respective supports, and with running polynomials $p$ and $q$:
\begin{equation}\label{projection}
 \sup_{p(x) \leq h(\xi \cdot x)} \int p d\mu < \inf_{q(x) \geq  h(\xi \cdot x)} \int q d\mu.
 \end{equation}
The function $h(\xi \cdot x)$ is constant along the fibers of $\pi$, hence $h = \pi^\ast g$, where $g: V \longrightarrow \RR$ is a polynomial function. Denoting by $r,s$ polynomials on $V$ we infer:
$$ \sup_{r \leq g} \int r d\pi_\ast \mu < \inf_{s \geq g} \int s d \pi_\ast \mu,$$
or, equivalently,
$$ \sup_{ r(\pi(x)) \leq h(\xi \cdot x)} \int r(\pi(x)) d\mu <   \inf_{ s(\pi(x)) \leq h(\xi \cdot x)} \int s(\pi(x)) d\mu$$
which is true in view of (\ref{projection}).

Both Fourier transform and Fantappi\`e transform criteria, respectively Theorems \ref{Fourier} and \ref{CONE}, contain a separating function
of the Radon transform form $x \mapsto h(\xi \cdot x)$ associated to a privileged vector $\xi \in \RR^d$.  Consequently, an orthogonal push forward measure on linear varieties $V$ containing $\xi$ will preserve the indeterminateness.

A celebrated theorem of Petersen \cite{Petersen} relates multivariate moment determinateness to the same property of the marginals. The proof was obtained via a natural weighted approximation scheme.
Remarking the invariance under linear changes of coordinates, we offer the following partial complementary picture.

\begin{prop} 
\label{prop:PETERSENCOMP}
Let $\mu$ be an admissible positive measure.  Then the following statements hold{\rm :}
\begin{enumerate}
\item[{\rm (a).}] If there exists a basis of $\RR^d$ with the property that the $1D$ marginals of $\mu$
along parallel projections with respect to this basis are all determinate, then $\mu$ is determinate.
\bigskip

\item[{\rm (b).}] Assuming the measure $\mu$ is indeterminate, there exists a coordinate, possibly non-orthogonal, frame with the property
that all marginals of $\mu$ are indeterminate.
\end{enumerate}
\end{prop}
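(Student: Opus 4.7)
The plan for part (a) is to reduce to Petersen's theorem \cite{Petersen} via a linear change of coordinates. The key preliminary observation is that an invertible linear map $T \in \mathrm{GL}(\RR^d)$ preserves moment equivalence of admissible measures, since $p \circ T$ is a polynomial whenever $p$ is; hence $\mu$ is determinate if and only if $T_*\mu$ is. I would then let $T$ be the linear map sending the given basis $e_1, \ldots, e_d$ to the standard basis. For each $i$, the standard $i$-th coordinate marginal of $T_*\mu$ coincides with the $i$-th coordinate marginal of $\mu$ in the basis $e_1, \ldots, e_d$, which is determinate by hypothesis. Petersen's theorem then applies to $T_*\mu$ and transports back to $\mu$.

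For part (b), the plan is to exploit the Fourier transform criterion of Theorem~\ref{Fourier}, together with the projection observation of Remark~\ref{rem:pushforward} that indeterminateness passes to 1D pushforwards along any direction witnessing a disagreement of Fourier transforms. First I would fix a distinct, moment-equivalent admissible measure $\nu$; since the Fourier transform is injective on finite positive Borel measures, the continuous function $h(\xi) := \widehat{\mu}(\xi) - \widehat{\nu}(\xi)$ is not identically zero, so the open set
$$U := \{\xi \in \RR^d : h(\xi) \neq 0\}$$
is non-empty. Were $U$ not to contain $d$ linearly independent vectors, it would lie in a proper linear subspace of $\RR^d$, contradicting its openness. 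I would therefore select linearly independent $\xi_1, \ldots, \xi_d \in U$ and declare these the projection directions of the desired (possibly non-orthogonal) coordinate frame.

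To confirm that each corresponding marginal is indeterminate, let $\rho_i \colon \RR^d \to \RR$ denote the projection $x \mapsto x \cdot \xi_i$. The pushforwards $(\rho_i)_*\mu$ and $(\rho_i)_*\nu$ share all power moments, because $\int t^n \, d(\rho_i)_*\mu(t) = \int (x \cdot \xi_i)^n \, d\mu(x)$ and the integrand is polynomial in $x$; their 1D Fourier transforms, evaluated at $t=1$, equal $\widehat{\mu}(\xi_i)$ and $\widehat{\nu}(\xi_i)$, which differ by the choice $\xi_i \in U$. Hence $(\rho_i)_*\mu \neq (\rho_i)_*\nu$, and the $i$-th marginal is indeterminate. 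The substantive input is Petersen's theorem in (a); in (b) the only delicate point is obtaining $d$ linearly independent witnesses, which the elementary openness argument above supplies.
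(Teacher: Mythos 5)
Your proof is correct and takes essentially the same route as the paper: part (a) is the identical reduction to Petersen's theorem via an invertible linear change of coordinates, and part (b) is the paper's one-line openness argument (the directions along which the marginal is indeterminate form a nonempty open set, hence span $\RR^d$), which you simply spell out through the Fourier transform criterion. The only detail you make explicit that the paper leaves implicit is why a nonempty open set of directions must contain $d$ linearly independent vectors, and that is a worthwhile addition.
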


\begin{proof} Part a) is Petersen's theorem, translated to an arbitrary linear basis.
To prove part b) we remark that the unit vectors along which the projection of $\mu$ is indeterminate form an open set on the sphere.
\end{proof}

\subsection{Small perturbations} Both separating functions $e^{i \xi \cdot x}$ and $\frac{1}{1+ a \cdot x}$ appearing the the previous section
are uniformly bounded on the respective supporting sets. As a matter of fact, a bounded separating function always exists. This simple remark shows that
a perturbation of admissible measures, small in total variation, preserves indeterminateness.

\begin{lem} Let $X \subset \RR^d$ be a closed subset. The space $C_0(X)$ of continuous functions of compact support contains separating functions
for any admissible, indeterminate measure supported by $X$.
\end{lem}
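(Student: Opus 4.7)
The plan is to exploit the indeterminateness of $\mu$ to produce a moment-equivalent, distinct admissible measure $\nu$, and then to separate $\mu$ from $\nu$ by a compactly supported continuous test function; Theorem~\ref{basic} will upgrade such a test function to a separating one in the variational sense of (\ref{separating}).

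First, I would use the hypothesis that $\mu$ is $X$-indeterminate to pick an admissible measure $\nu \sim_X \mu$ with $\nu \neq \mu$. Both measures have finite total mass, since the constant polynomial $1$ is admissible. Because $X$ is a closed subset of $\RR^d$, it is locally compact and Hausdorff, so F.~Riesz' representation theorem applies: distinct finite Radon measures on $X$ induce distinct positive linear functionals on the space of continuous functions of compact support. Consequently there exists $\phi \in C_0(X)$ with
$$\int_X \phi\, d\mu \;\neq\; \int_X \phi\, d\nu.$$

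Finally, I would observe that such $\phi$ is bounded, hence lies in $C_p(X)$, and cannot agree on $X$ with any polynomial function, since polynomials integrate identically against $\mu$ and $\nu$. Applying part~2 of Theorem~\ref{basic}, the two distinct scalars $\int_X \phi\, d\mu$ and $\int_X \phi\, d\nu$ both lie inside the interval $[\sup_{p \leq \phi} L(p), \inf_{q \geq \phi} L(q)]$, forcing it to be non-degenerate; equivalently, $\phi$ satisfies (\ref{separating}).

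The only non-trivial step in this outline is the invocation of Riesz-Markov to produce $\phi \in C_0(X)$ separating two finite Radon measures, but this is classical (cf.\ \cite{Bourbaki}) and requires no refinement. Note that the case of compact $X$ is vacuous: Stone-Weierstrass density of polynomials in $C(X)$ precludes indeterminate admissible measures altogether, so the lemma has genuine content only when $X$ is unbounded, in which case any non-zero compactly supported continuous function on $X$ is automatically non-polynomial.
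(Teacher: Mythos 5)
Your proof is correct, but it reaches the compactly supported separating function by a different route than the paper. The paper starts from an arbitrary separating function $\phi \in C_p(X)$ (guaranteed by Theorem \ref{basic}), chooses a compact exhaustion $K_n \subset \mathrm{int}\, K_{n+1}$ of $X$ with cutoff functions $\kappa_n$, and shows via dominated convergence that $\kappa_n\phi \in C_0(X)$ still satisfies $\int \kappa_n\phi\, d\mu \neq \int \kappa_n\phi\, d\nu$ for $n$ large. You instead invoke the uniqueness clause of the Riesz--Markov representation theorem: two distinct finite Radon measures on the locally compact space $X$ already differ on $C_0(X)$. The two arguments are essentially equivalent in content --- the truncation-plus-dominated-convergence step is precisely how that uniqueness clause is proved for finite measures --- so you have black-boxed what the paper spells out. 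The paper's version has the small added benefit of exhibiting the $C_0$ separating function as a truncation of any prescribed separating function, in the spirit of its later perturbation arguments; yours is shorter and cleanly reduces both the non-polynomiality of $\phi$ and the variational inequality (\ref{separating}) to the moment equivalence $\mu\sim_X\nu$ together with part 2 of Theorem \ref{basic}.

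One caveat on your closing remark: the assertion that on an unbounded closed $X$ every non-zero compactly supported continuous function is automatically non-polynomial is false in general. Take $X = (\RR\times\{0\})\cup\{(0,1)\}\subset\RR^2$ and restrict $p(x,y)=y$ to $X$: the result is continuous, supported on the single isolated point $(0,1)$, non-zero, and a polynomial function on $X$. This does not affect your proof, since you had already established non-polynomiality of your particular $\phi$ from $\int\phi\,d\mu\neq\int\phi\,d\nu$ and $\mu\sim_X\nu$, but the blanket claim should be dropped.
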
 

\begin{proof} Let $\mu, \nu$ be two admissible measures on $X$, moment equivalent, but distinct. That is, there exists a function $\phi \in C_p(X)$
with the property $\int \phi d\mu \neq \int \phi d\nu$. Let $K_n \subset {\rm int} K_{n+1}$ be an exhaustion of $X$ with compact sets, with an attached 
system of continuous functions $\kappa_n, \ 0 \leq \kappa_n \leq 1,$ satisfying
$$ {\rm supp} (\kappa_n) \subset K_{n+1}, \ \ \kappa_n(x) = 1, \ x \in K_n.$$
Lebesgue dominated convergence theorem implies
$$ \lim_n \int \kappa_n \phi d\mu = \int \phi d\mu,$$
and similarly for the measure $\nu$. That is, for $n$ sufficiently large, the functions $\kappa_n f$ are separating the measures $\mu$ and $\nu$.
\end{proof} 

In particular, on any support and any indeterminate measure there are uniformly bounded separating functions.

\begin{prop} Let $X \subset \RR^d$ be a closed set and let $\mu$ be an admissible, $X$-indeterminate measure. There exists $\epsilon >0$
with the property that for all admissible measures $\sigma$ supported on $X$, of total mass less than $\epsilon$, the measure $\mu + \sigma$
is admissible and $X$-indeterminate.
\end{prop}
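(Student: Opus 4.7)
The plan is to transport the bounded separating function of $\mu$ (supplied by the preceding lemma) to the perturbed measure $\mu + \sigma$, exploiting only the positivity of $\sigma$. By the lemma there exists $\phi \in C_0(X)$ separating $\mu$, that is,
\[
\gamma \;:=\; \inf_{q \geq \phi} \int_X q \, d\mu \;-\; \sup_{p \leq \phi} \int_X p \, d\mu \;>\; 0,
\]
with sup and inf ranging over polynomials $p, q \in \RR[x]$ satisfying $p \leq \phi \leq q$ on $X$.

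First, admissibility of $\mu + \sigma$ is immediate, since $\int_X |r| \, d(\mu+\sigma) \leq \int_X |r| \, d\mu + \int_X |r| \, d\sigma < \infty$ for every polynomial $r$, using admissibility of both summands. Second, for the key separation step, observe that whenever $p \leq \phi \leq q$ on $X$, positivity of $\sigma$ gives $\int_X p \, d\sigma \leq \int_X \phi \, d\sigma \leq \int_X q \, d\sigma$. Writing $c_\sigma := \int_X \phi \, d\sigma$ and using these inequalities to shift both extrema by $c_\sigma$, a direct subtraction yields
\[
\inf_{q \geq \phi} \int_X q \, d(\mu+\sigma) \;-\; \sup_{p \leq \phi} \int_X p \, d(\mu+\sigma) \;\geq\; \gamma \;>\; 0.
\]
Theorem~\ref{basic}(2) applied to the positive functional $L_{\mu+\sigma}$ with separating function $\phi$ then produces a distinct moment-equivalent measure, completing the proof.

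Notice that this monotonicity argument uses no smallness of $\sigma(X)$, so the parameter $\epsilon$ plays essentially no role in the pure positive-measure setting: any admissible positive perturbation preserves indeterminateness. The $\epsilon$-bound would become substantive only in a signed-perturbation extension, where one might hope to use $|\int_X \phi \, d\sigma| \leq \|\phi\|_\infty \cdot \epsilon$ to absorb possible gap losses. I expect the principal obstacle in that direction to be that the competing polynomials $p, q$ are unbounded on $X$, so the integrals $\int_X p \, d\sigma^{\pm}$ cannot be dominated by $|\sigma|(X)$ alone; moreover, positivity of $\mu + \sigma$ is not ensured by smallness of total variation. For the positive-measure formulation in the statement, however, the monotonicity argument above already suffices.
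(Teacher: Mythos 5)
Your argument is correct, and it reaches the same conclusion the paper does by a slightly different mechanism. The paper's own proof also starts from the preceding lemma (a compactly supported separating function $\phi$ with $\|\phi\|_\infty = 1$), but instead of re-establishing the Riesz gap for $\mu+\sigma$, it exhibits the competitor explicitly: if $\nu \sim \mu$ with $\int \phi\, d\mu \neq \int \phi\, d\nu$, then $\mu+\sigma$ and $\nu+\sigma$ are moment equivalent and the $\sigma$-contributions cancel in $\int \phi\, d(\mu+\sigma) - \int \phi\, d(\nu+\sigma) = \int \phi\, d\mu - \int \phi\, d\nu \neq 0$, so the perturbed measures are distinct. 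Your route instead shows, by the monotonicity estimates $\int p\, d\sigma \le \int \phi\, d\sigma \le \int q\, d\sigma$ for $p \le \phi \le q$, that the variational gap for $\mu+\sigma$ is at least $\gamma$, and then invokes Theorem~\ref{basic}(2) to produce a competitor. Both arguments are valid; the paper's is marginally more economical (no need to compare infima and suprema), while yours has the small advantage of quantifying that the gap does not shrink under the perturbation. Your observation that the smallness hypothesis $\sigma(X) < \epsilon$ is never actually used for positive perturbations is accurate, and in fact the paper's own proof chooses $\epsilon < |\int \phi\, d\mu - \int \phi\, d\nu|$ but then never exploits this bound either, since the $\sigma$-terms cancel exactly; your diagnosis that $\epsilon$ would only become substantive for signed perturbations is a fair reading of why the statement is phrased as it is. One minor point worth making explicit: the strict positivity of $\gamma$ already forces $\phi \notin \RR[X]$, so the hypothesis of Theorem~\ref{basic}(2) that the separating function be non-polynomial is automatically met.
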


\begin{proof} According to the Lemma above, if $\nu$ is an admissible measure on $X$, moment equivalent to $\mu$, there exists a separating function
of compact support $\phi \in C_0(X)$. We can assume $\| \phi \|_{\infty} =1$. Choose
$$ \epsilon < | \int \phi d\mu - \int \phi d\nu|.$$
 The measure $\sigma$ in the statement has the property that
$\mu + \sigma$ and $\nu + \sigma$ are moment equivalent, yet 
$$ |\int \phi d(\mu + \sigma) - \int \phi d(\nu+ \sigma)| > 0.$$
\end{proof}

\subsection{Convolution}  Convolution transforms change in general the moment data,  preserving however the indeterminateness feature, and possibly
improving the regularity of the original measure.

\begin{prop} Let $\mu$ be an indeterminate, admissible measure on $\RR^d$ and let $\tau \neq 0$ be a positive measure of compact support.
Then the convolution $\tau \ast \mu$ is admissible and indeterminate.
\end{prop}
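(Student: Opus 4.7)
The plan is to produce a distinct moment-equivalent measure for $\tau \ast \mu$ by convolving $\tau$ with the witness $\nu$ to the indeterminateness of $\mu$. Pick $\nu \neq \mu$ admissible on $\RR^d$ with $\mu \sim \nu$. I will show that (i) $\tau \ast \mu$ (and $\tau \ast \nu$) is admissible, (ii) $\tau \ast \mu \sim \tau \ast \nu$, and (iii) $\tau \ast \mu \neq \tau \ast \nu$. The first step is routine: if $\operatorname{supp} \tau \subset \{\|y\| \le R\}$, then
\[
\int \|x\|^n \, d(\tau \ast \mu)(x) \le \tau(\mathbbm{1}) \int (\|x\|+R)^n d\mu(x) < \infty,
\]
so all polynomial moments of $\tau \ast \mu$ are finite.

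For step (ii), fix a monomial $x^\alpha$ and expand $(x+y)^\alpha = \sum_{\beta \le \alpha} \binom{\alpha}{\beta} x^\beta y^{\alpha-\beta}$. Fubini (legitimate by finiteness of $\tau$ and by absolute integrability of polynomials against $\mu$ and $\nu$) gives
\[
\int x^\alpha \, d(\tau\ast\mu)(x) = \sum_{\beta \le \alpha}\binom{\alpha}{\beta}\Bigl(\int x^\beta d\mu(x)\Bigr)\Bigl(\int y^{\alpha-\beta} d\tau(y)\Bigr),
\]
and the identical expression holds for $\nu$ in place of $\mu$ because $\mu \sim \nu$ on all monomials. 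Hence $\tau\ast\mu$ and $\tau\ast\nu$ have the same power moments.

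Step (iii) is the analytic heart of the argument. The convolution theorem gives $\widehat{\tau\ast\mu} = \hat{\tau}\,\hat{\mu}$ and likewise for $\nu$, so $\tau\ast\mu = \tau\ast\nu$ would force $\hat{\tau}(\hat{\mu}-\hat{\nu}) \equiv 0$ on $\RR^d$. Since $\tau$ is a nonzero, finite, compactly supported measure, the Paley--Wiener theorem tells us $\hat{\tau}$ extends to a nontrivial entire function on $\CC^d$, hence its real zero set has empty interior, so $\{\hat{\tau}\neq 0\}$ is open and dense in $\RR^d$. On this dense set we must have $\hat{\mu}=\hat{\nu}$, and by continuity $\hat{\mu}\equiv \hat{\nu}$ on all of $\RR^d$, forcing $\mu=\nu$ by Fourier uniqueness -- a contradiction.

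The main obstacle I anticipate is point (iii): one must rule out the possibility that $\hat{\tau}$ annihilates the difference $\hat{\mu}-\hat{\nu}$, and the cleanest way (and the one I will use) is the real-analyticity/Paley--Wiener property of $\hat{\tau}$ that follows from compact support of $\tau$. If one wanted to weaken the compact-support hypothesis to some other class ensuring $\hat{\tau}$ has a zero set of empty interior, the same argument would go through, but for the stated proposition the compact support assumption is exactly what is needed.
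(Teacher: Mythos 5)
Your proposal is correct and follows essentially the same route as the paper: expand $p(x+y)$ (you use the binomial expansion of monomials, the paper uses Taylor's formula, which is the same computation) to get moment equivalence of $\tau\ast\mu$ and $\tau\ast\nu$, then distinguish them via $\widehat{\tau\ast\mu}=\hat{\tau}\hat{\mu}$ and the fact that the real zero set of the nontrivial entire function $\hat{\tau}$ is nowhere dense in $\RR^d$. Your explicit verification of admissibility is a small welcome addition that the paper passes over quickly.
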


\begin{proof} Since $\tau$ has compact support, the convolution $\tau \ast \mu$ is well defined in the sense of distributions, and it is a measure.
Let $p(x)$ be a polynomial. By definition,
$$ \int p d (\tau \ast \mu) = \int \int p(x+y) d\tau(y) d\mu(x). $$
The iterated integral is convergent due to the assumption that the measure $\mu$ is admissible, that is all continuous functions of polynomial growth are
$\mu$-integrable. 

Taylor's expansion yields the finite sum
$$ p(x+y) = \sum_k \frac{p^{(k)}(y)}{k!} x^k.$$
We deduce that, if $\nu$ is an admissible measure, moment equivalent to $\mu$, then
$$ \int \int p(x+y) d\tau(y) d\mu(x) = \int \int p(x+y) d\tau(y) d\nu(x). $$
And that is true for all polynomials $p$.

Assume that the two moment equivalent measures $\mu$ and $\nu$ are distinct. We have to prove that $\tau \ast \mu$ is different than
$\tau \ast \nu$. Passing to Fourier transforms we find
$$ \widehat{\tau \ast \mu} = \hat{\tau} \hat{\mu}.$$
But $\hat{\tau}$ is an entire function on $\CC^d$, while $\hat{\mu}$ is a function of class ${\mathcal C}^{\infty}(\RR^d)$.
Since $\tau \neq 0$, the zeros of the Fourier transform $\hat{\tau}$ are supported on an analytic hypersurface in $\CC^d$, hence they are nowhere dense
in $\RR^d$.

Assuming by contradiction $ \hat{\tau} \hat{\mu} =  \hat{\tau} \hat{\nu}$ we find $\hat{\mu} = \hat{\nu}$, that is $\mu = \nu$.
\end{proof}

The preceding proof can be adapted to more general measures $\tau$, assuring the convergence of the convolution integral. Letting $\tau = \phi dx$, where $\phi$ is a continuous function of compact support one can produce new indeterminate measures $\phi \ast \mu$ which are absolutely continuous with respect to Lebesgue measure. The well known support inclusion
$$ {\rm supp} (\tau \ast \mu) \subset {\rm supp} \tau \ + \ {\rm supp} \mu$$
indicates how to adapt the convolution transform to prescribed supports.

A natural convolution transform of an admissible measure $\mu$ is its Newtonian potential
$ U^\mu(x) = \int E(x-y) d\mu(y)$, where $E(x) = - \log \| x \|$ in dimension $d=2$ and $E(x) = \|x\|^{2-d}$ for $d>2$. The fine  properties of the 
functions $U^\mu(x)$ are well studied, to illustrate only by a very recent contribution \cite{Verdera}. Since $\Delta U^\mu = {\rm const.} \mu$, one can also regard translates $E(x-y)$ of the fundamental solutions as separating functions for measures. We do not pursue this path here.

\bigskip

\subsection{Positive weights} Multiplying an indeterminate measure with well adapted polynomial weights does not transform it into a determinate measure. We present such an observation.

\begin{prop}\label{multiplier} Let $X \subset \RR^d$ be a closed set and let $\mu$ be an $X$-indeterminate measure. If $w$ is a non-negative polynomial on $X$, with finitely many zeros on $X$, then the measure $w\mu$ is still indeterminate.
\end{prop}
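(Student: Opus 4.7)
The natural candidate for a moment-equivalent competitor to $w\mu$ is $w\nu$, where $\nu$ is an admissible measure distinct from $\mu$ and moment equivalent to it (guaranteed by the hypothesis that $\mu$ is $X$-indeterminate). The plan is to verify that (i) $w\nu$ is admissible, (ii) $w\mu \sim_X w\nu$, and (iii) $w\mu \neq w\nu$. Parts (i) and (ii) are immediate: for any polynomial $p$, the product $pw$ is again a polynomial, so $\int |p|\, d(w\nu) = \int |pw|\, d\nu < \infty$ and $\int p\, d(w\mu) = \int pw\, d\mu = \int pw\, d\nu = \int p\, d(w\nu)$. The entire substance of the proof lies in (iii).

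For (iii), I would argue by contradiction. Suppose $w\mu = w\nu$ as Radon measures on $X$, and set $Z := \{x \in X : w(x) = 0\}$, which by hypothesis is finite. The open set $U := X \setminus Z$ carries $w$ as a strictly positive continuous function, hence for every compact $K \subset U$ the reciprocal $1/w$ is continuous and bounded on $K$. Thus for any $\varphi \in C_c(U)$, the function $\varphi/w$ lies in $C_c(U) \subset C_p(X)$, and
\begin{equation*}
\int \varphi\, d\mu = \int \frac{\varphi}{w} \cdot w\, d\mu = \int \frac{\varphi}{w} \cdot w\, d\nu = \int \varphi\, d\nu.
\end{equation*}
By F. Riesz's representation theorem applied on the locally compact space $U$, we conclude $\mu|_U = \nu|_U$.

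It follows that the signed measure $\mu - \nu$ is supported on the finite set $Z$, hence has the form $\sum_{z \in Z} c_z \delta_z$ for real scalars $c_z$. Since polynomials separate points of $\RR^d$, for each $z_0 \in Z$ there is a polynomial $p_{z_0} \in \RR[x]$ with $p_{z_0}(z_0) = 1$ and $p_{z_0}(z) = 0$ for $z \in Z \setminus \{z_0\}$ (take products of suitable linear forms). Moment equivalence $\mu \sim_X \nu$ then gives $c_{z_0} = \sum_{z} c_z p_{z_0}(z) = \int p_{z_0}\, d(\mu - \nu) = 0$, so $\mu = \nu$, contradicting the choice of $\nu$.

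The only delicate point is the passage from $w\mu = w\nu$ to $\mu|_U = \nu|_U$; here the essential use of the hypothesis on $w$ is that its zero set $Z$ is discrete, so the complement $U$ is open and $1/w$ belongs to $C(U)$. If $Z$ were allowed to be infinite (say a positive-dimensional variety), the step producing $\varphi/w \in C_p(X)$ would fail, and the proposition itself could break down.
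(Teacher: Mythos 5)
Your proof is correct, but it takes a genuinely different route from the paper's. You argue by contradiction: assuming $w\mu = w\nu$, you divide by $w$ on the open set $U = X\setminus Z$ to get $\mu|_U = \nu|_U$, so $\mu-\nu$ is a finite combination of point masses on the finite zero set $Z$, and Lagrange-type interpolating polynomials together with moment equivalence force every coefficient to vanish. The paper instead works in the forward direction, in keeping with its theme of separating functions: starting from a continuous $\phi$ with $\int\phi\,d\mu \neq \int\phi\,d\nu$, it subtracts a polynomial $q$ interpolating $\phi$ on $Z$, multiplies by a continuous cutoff vanishing near $Z$, and divides by a truncation $\tilde w_R$ of $w$ (equal to $w$ for $|x|<R$ and to $\max(1,w)$ outside) to produce an explicit Baire-1 function $\Phi$ of polynomial growth with $\int \Phi\, w\,d\mu \neq \int\Phi\, w\,d\nu$. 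Your argument is shorter and more elementary: it needs neither the Baire-1 machinery nor the truncation at infinity, and it isolates cleanly the one place where finiteness of $Z$ matters (your closing remark on this is exactly right; the paper exploits the same finiteness through the interpolating polynomial $q$ and through the fact that an indeterminate measure cannot be concentrated on a finite set). What the paper's construction buys is an explicit separating function for the pair $(w\mu, w\nu)$, consistent with the article's variational viewpoint, and its interpolation-plus-cutoff mechanism is reused almost verbatim in the later theorem on real polynomial curves. Either proof is complete and acceptable.
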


\begin{proof} Let $\nu$ be an admissible measure supported on $X$, moment equivalent to $\mu$, but different than $\mu$. Therefore there exists a continuous function $\phi$ on $X$,
of polynomial growth, such that $\int \phi d\mu \neq \int \phi d\nu$. Let $q$ be a polynomial which interpolates $\phi$ on the zero set $V$ of $w$. Since $\int q d\mu = \int q d\nu$ we infer
$$ \int (\phi - q) d\mu \neq \int (\phi-q) d\nu.$$
Remark that he indeterminate measures $\mu, \nu$ are not supported on the finite set $V$. 

Choose for every $\delta>0$ a continuous function $\chi_\delta$ with the properties $0 \leq \chi_\delta \leq 1$,
$\chi_\delta (x) = 0, \ \ {\rm dist}(x,V) < \delta$ and $\chi_\delta(x) = 1, \ \ {\rm dist}(x,V) > 2\delta$. In virtue of Lebesgue dominated convergence theorem, there exists $\epsilon > 0$
satisfying
$$ \int \chi_\epsilon (\phi - q) d\mu \neq \int \chi_\epsilon (\phi-q) d\nu.$$
For $R>0$ the Baire-1 function 
$$ \tilde{w}_R(x) = \left\{ \begin{array}{c} w(x), \ |x|<R,\\
                                                        \max (1, w(x)), \  |x|\geq R, \end{array} \right. $$
                                                        has the property $\frac{w}{\tilde{w}_R}(x) \leq 1, \ \ |x| \geq R.$
                                                        
The measure $\mu$ decays fast at infinity. In particular, for every continuous function $\xi$ of polynomial growth one finds
$$ \lim_{R \rightarrow \infty} \int \frac{w}{\tilde{w}_R} \xi d\mu = \int \xi d\mu,$$
and similarly for $\nu$.

Then the Baire-1 function $\Phi = \frac{ \chi_\epsilon (\phi - q) }{\tilde{w}_R}$ has polynomial growth and for $R$ large enough
$ \int \Phi w d\mu \neq \int \Phi w d \nu.$ On the other hand, the measures $w\mu$ and $w \nu$ are moment equivalent.
\end{proof} 

A typical choice of weight, for a prescribed finite set $V$, is $w(x) = \prod_{\lambda \in V} \| x-\lambda \|^2.$
\bigskip

\subsection{Equivariant setting} Let $X \subset \RR^d$ be a semialgebraic set and let $G$ be a compact group acting on $X$. The Haar measure on $G$ is denoted $dH$.

\begin{lem}
If two $G$-invariant, admissible measures on $X$ are moment equivalent, but distinct, then there exists a $G$-invariant separating function.
\end{lem}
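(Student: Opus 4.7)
The plan is to use Reynolds averaging over $G$ to promote an arbitrary separating test function to a $G$-invariant one.

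First, since $\mu \neq \nu$ as Radon measures and both lie in $C_p(X)^\ast$, F.~Riesz' representation theorem produces a function $\phi \in C_p(X)$ with $\int_X \phi\, d\mu \neq \int_X \phi\, d\nu$. I would then form the Haar average
$$\tilde\phi(x) \;=\; \int_G \phi(g \cdot x)\, dH(g), \qquad x \in X,$$
which is $G$-invariant by invariance of $dH$. Two facts must be checked: that $\tilde\phi \in C_p(X)$ and that $\tilde\phi$ is not a polynomial. Continuity of $\tilde\phi$ follows from continuity of the action $G \times X \to X$, compactness of $G$, and dominated convergence. Polynomial growth is the one mildly delicate step: using compactness of $G$ and continuity of the action on $\RR^d$ (in the standard linear setting, a $G$-invariant norm equivalent to $\|\cdot\|$ is obtained by averaging), one has a uniform bound $1 + \|g \cdot x\| \leq C(1+\|x\|)$ for all $g \in G$ and $x \in X$. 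Hence $|\phi(g \cdot x)| \leq C'(1+\|x\|)^n$ uniformly in $g$, and this bound survives integration against $dH$.

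Next, I would apply Fubini together with the $G$-invariance of $\mu$:
$$\int_X \tilde\phi\, d\mu \;=\; \int_G \int_X \phi(g \cdot x)\, d\mu(x)\, dH(g) \;=\; \int_G \int_X \phi\, d\mu\, dH(g) \;=\; \int_X \phi\, d\mu,$$
and the same identity holds with $\nu$ in place of $\mu$. Consequently $\int_X \tilde\phi\, d\mu \neq \int_X \tilde\phi\, d\nu$. Since $\mu$ and $\nu$ are moment equivalent, $\tilde\phi$ cannot be a polynomial function. Theorem \ref{basic} then places both of these distinct values inside the interval $[\sup_{p\leq \tilde\phi} L(p),\, \inf_{q \geq \tilde\phi} L(q)]$, so this interval has positive length and $\tilde\phi$ qualifies as a $G$-invariant separating function in the sense of condition (\ref{separating}).

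The main (mild) obstacle is the polynomial growth bookkeeping, which is where the hypotheses on \emph{how} $G$ acts on $X$ (continuity, linearity, or semialgebraicity of the action) actually enter; the remainder of the argument is a straightforward Fubini-with-invariance calculation.
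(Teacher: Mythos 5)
Your proposal is correct and follows essentially the same route as the paper: both arguments take a separating function and replace it by its Haar average $\phi^G(x)=\int_G\phi(gx)\,dH(g)$, using Fubini and the $G$-invariance of the two measures to see that the averaged function still separates them. The only difference is that you spell out the continuity, polynomial-growth, and non-polynomiality checks that the paper leaves implicit.
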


\begin{proof} Let $\phi \in C(X)$ be a continuous separating function for the two measures. The invariance of the measure to translations by an element $g$ of $G$ yields
$$ \int \phi (gx) d\mu(x)  = \int \phi(x) d\mu(x) \neq \int \phi(x) d\nu(x)  = \int \phi (gx) d\nu(x).$$
Consequently the $G$-invariant average $(\phi^G)(x) = \int \phi(gx) dH(g)$ satisfies
$$\int \phi^G d\mu = \int \phi d\mu \neq \int \phi d\nu = \int \phi^G d\nu.$$
\end{proof} 

The above lemma shows that the push-forward measures on the quotient space $X/G$ are distinct. In case $X/G$ can be identified with a semi-algebraic set of Euclidean space,
and the projection map is induced by polynomials, then the two push-forward measures are moment equivalent, but distinct. This is the case of the action of the rotation group on $\RR^d$. A detailed analysis of the latter scenario was carried out by Berg and Thill  \cite{BT}.

\subsection{Completely monotonic functions}

The integral transforms we invoked in producing separating functions for indeterminate measures are derived from completely monotonic functions of a single variable.
We provide below a general framework of producing lower and upper polynomial functions (as a matter of fact MacLaurin polynomials) of completely monotonic functions.
Although the determinateness criteria obtained this way may not be sharp, they open a natural path towards a deeper study of multivariate moment problems. 

A smooth function $\phi : [0, \infty) \longrightarrow \RR$ is called {\it completely monotonic} if
$$ (-1)^n \phi^{(n)}(x) \geq 0,  \ \ x \geq 0.$$
A classical characterization due to Bernstein relates these functions to Laplace transforms of positive measures on the semi-axis, \cite{HenkinShananin}.
The MacLaurin polynomial of degree $n$ of a smooth function $\phi$ is denoted, in short, by
$$ M_n(\phi)(x) = \sum_{j=0}^n \frac{ \phi^{(n)}(0)}{n!} x^n.$$

\begin{lem} Let $\phi : [0, \infty) \longrightarrow \RR$ be a completely monotonic function. Then
$$ M_{2n-1}(\phi)(x) \leq \phi(x) \leq M_{2n}(\phi)(x), \ \ n \geq 1, x \geq 0.$$
\end{lem}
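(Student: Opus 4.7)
The plan is to use Taylor's theorem with the integral form of the remainder and then read off the two inequalities by tracking the sign of $\phi^{(k+1)}$ on $[0,x]$.

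First I would invoke the integral remainder formula
$$\phi(x) - M_k(\phi)(x) = \frac{1}{k!}\int_0^x (x-t)^k \phi^{(k+1)}(t)\, dt$$
valid for every $k \geq 0$ and $x \geq 0$, which follows from repeated integration by parts (or from the fundamental theorem of calculus applied to the function $t \mapsto \sum_{j=0}^{k} \phi^{(j)}(t)(x-t)^j/j!$). Since the integrand factor $(x-t)^k$ is non-negative on $[0,x]$, the sign of the remainder is governed entirely by the sign of $\phi^{(k+1)}$.

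Next I would read off the signs from the definition of complete monotonicity. The assumption $(-1)^k \phi^{(k)}(x) \geq 0$ for all $k \geq 0$ and $x \geq 0$ is equivalent to
$$\phi^{(2j)}(x) \geq 0, \qquad \phi^{(2j+1)}(x) \leq 0, \qquad x \geq 0, \; j \geq 0.$$
Taking $k = 2n-1$ gives $\phi^{(k+1)} = \phi^{(2n)} \geq 0$, so the remainder is non-negative and
$\phi(x) \geq M_{2n-1}(\phi)(x)$. Taking $k = 2n$ gives $\phi^{(k+1)} = \phi^{(2n+1)} \leq 0$, so the remainder is non-positive and $\phi(x) \leq M_{2n}(\phi)(x)$. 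Combining the two inequalities gives the claim.

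There is essentially no obstacle here: the entire argument is Taylor's theorem together with the alternating-sign characterization of completely monotonic functions. The only thing worth flagging is that one only needs $\phi \in C^\infty$ on $[0,\infty)$ with the stated derivative sign pattern, not the full Bernstein representation as a Laplace transform; so the lemma applies to the concrete kernels produced in the Fourier--Laplace, Fantappi\`e, and Poisson sections without any additional integrability hypothesis on the representing measure behind $\phi$.
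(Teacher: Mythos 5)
Your proof is correct and follows exactly the paper's route: the paper likewise invokes Taylor's formula with the integral form of the remainder, $\phi(x) = M_k(\phi)(x) + \frac{1}{k!}\int_0^x (x-t)^k \phi^{(k+1)}(t)\,dt$, and leaves the sign-reading from $(-1)^{k+1}\phi^{(k+1)}\geq 0$ implicit, which you have simply spelled out. No difference in substance.
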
 

\begin{proof}
Taylor's formula with a remainder in integral form 
$$ \phi(x) = M_n(\phi)(x) + \int_0^x \frac{(x-t)^n}{n!} \phi^{(n+1)}(t) dt, \ \ n \geq 0, x \geq 0,$$
implies the result.
\end{proof}

Let $\omega : \RR^d \longrightarrow [0,\infty)$ be a continuous weight. Passing now to several variables and assuming $\phi(\omega (x))$ is a separating function for an indeterminate admissible measure $\mu$ on $\RR^d$, we infer
$$ \inf_n \int [M_{2n}(\phi)(\omega(x)) - M_{2n-1}(\phi)(\omega(x))] d\mu(x) > 0,$$
or, equivalently,
$$ \inf_n  \left(\frac{ \phi^{(2n)}(0)}{(2n)!} \int \omega(x)^{2n} d\mu(x) \right) > 0.$$

Similarly, the monotonic approximation of the function $\cos t$ by its MacLaurin polynomials led Riesz to an effective 1D moment indeterminateness condition, see pg. 47 of
\cite{Riesz-3}. An alternate proof with some variations of the same criterion appears in Freud's monograph \cite{Freud} section II.5. Both criteria are however slightly weaker than the well known Carleman condition. Regardless to say that, in the multivariate setting, Riesz' determinateness criterion imposed on the marginals of a measure assure its joint determinateness.

\section{Bounded point evaluations}

The guiding light of univariate moment indeterminateness is the existence of bounded point evaluations at non-real values. This is well encoded in the 
non-vanishing of the Christoffel function associated to a moment data. Much less is known in the multivariable setting. Below we derive a few existence results of bounded point evaluations, or rather bounded hyperplane evaluations, from the observations contained in the previous section.

As before, let  $\Gamma \subseteq \RR^d$ be an acute, convex and solid cone and let $\Gamma^\ast$ be the dual cone.
For a point $a \in \RR^d$ we denote
$$H_a = \{ x \in \RR^d: a \cdot x + 1 =0\}.$$
Notice that a polynomial $R \in \RR[x_1, \ldots, x_d]$ vanishes on $H_a$ if and only if $R$ factors through $a \cdot x + 1$.

\begin{prop}
If an admissible measure $\mu$ supported on $\Gamma^*$ is moment indeterminate on $\Gamma^*$, then there exists $a \in {\rm int}\, \Gamma$ such that at least one of the following quantities is positive, i.e., 
$$
\inf \left \{ \int_{\Gamma^*} r \, d\mu: r|_{H_a} = 1 \; {\rm and}\; r|_{\Gamma^*} \geq 0 \right\} > 0
$$
or
$$
\inf \left \{ \int_{\Gamma^*} r \, d\mu: r|_{H_a} = -1 \; {\rm and}\; r|_{\Gamma^*} \geq 0 \right\} > 0,
$$
where $r$ is a polynomial.
\end{prop}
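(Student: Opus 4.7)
The plan is to deduce the claim from Theorem \ref{CONE} by translating the competitors with prescribed trace $\pm 1$ on the hyperplane $H_a$ into polynomial minorants and majorants of the separating function $\phi_a(x):=1/(a\cdot x+1)$ produced by that theorem.

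First I would invoke Theorem \ref{CONE} on the indeterminate measure $\mu$ to pick $a\in{\rm int}\,\Gamma$ together with a strictly positive gap
$$
c\;:=\;\inf_{q\geq\phi_a}L(q)\;-\;\sup_{p\leq\phi_a}L(p)\;>\;0,
$$
where $L(f):=\int_{\Gamma^{\ast}}f\,d\mu$ and every pointwise inequality is understood on $\Gamma^{\ast}$. The crucial quantitative remark at this stage is that $a\in{\rm int}\,\Gamma$ and $x\in\Gamma^{\ast}$ force $a\cdot x\geq 0$, so $a\cdot x+1\geq 1$ throughout $\Gamma^{\ast}$; in particular $H_a$ is disjoint from $\Gamma^{\ast}$.

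Next I would set up an algebraic bijection between the feasible sets in the proposition and polynomial minorants/majorants of $\phi_a$. Because $a\cdot x+1$ is of degree one, a polynomial vanishes on $H_a$ if and only if it is divisible by $a\cdot x+1$, so each polynomial $r$ with $r|_{H_a}=1$ admits the unique decomposition $r=1-(a\cdot x+1)p$ for some polynomial $p$; after dividing by the positive quantity $a\cdot x+1$, the constraint $r\geq 0$ on $\Gamma^{\ast}$ becomes $p\leq\phi_a$. This yields
$$
\inf\bigl\{L(r):r|_{H_a}=1,\;r|_{\Gamma^{\ast}}\geq 0\bigr\}\;=\;L(1)-\sup_{p\leq\phi_a}L\bigl((a\cdot x+1)p\bigr).
$$
The parallel decomposition $r=-1+(a\cdot x+1)q$, $q\geq\phi_a$, analogously gives
$$
\inf\bigl\{L(r):r|_{H_a}=-1,\;r|_{\Gamma^{\ast}}\geq 0\bigr\}\;=\;\inf_{q\geq\phi_a}L\bigl((a\cdot x+1)q\bigr)-L(1).
$$

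Finally I would close by contradiction. If both right-hand sides vanished, there would exist polynomial sequences $p_n\leq\phi_a\leq q_n$ on $\Gamma^{\ast}$ with $L((a\cdot x+1)p_n)\to L(1)$ and $L((a\cdot x+1)q_n)\to L(1)$, hence $L\bigl((a\cdot x+1)(q_n-p_n)\bigr)\to 0$. On the other hand, $q_n-p_n\geq 0$ on $\Gamma^{\ast}$ and $a\cdot x+1\geq 1$ there imply $(a\cdot x+1)(q_n-p_n)\geq q_n-p_n$ pointwise on $\Gamma^{\ast}$, so that
$$
L\bigl((a\cdot x+1)(q_n-p_n)\bigr)\;\geq\;L(q_n-p_n)\;\geq\;\inf_{q\geq\phi_a}L(q)-\sup_{p\leq\phi_a}L(p)\;=\;c\;>\;0,
$$
a contradiction. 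I expect the only non-trivial step to be the algebraic parametrization; once the decomposition $r=1\mp(a\cdot x+1)p$ is in hand, the whole argument is driven by the single pointwise bound $a\cdot x+1\geq 1$, which is the algebraic incarnation of the geometric fact that $H_a$ avoids the support of $\mu$.
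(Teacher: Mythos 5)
Your proof is correct and follows essentially the same route as the paper: the decomposition $r = 1 - (a\cdot x+1)p$ (respectively $r = -1 + (a\cdot x+1)q$), the equivalence of $r\geq 0$ on $\Gamma^{\ast}$ with $p\leq 1/(a\cdot x+1)$ (respectively $q\geq 1/(a\cdot x+1)$), and the reduction to the positive gap supplied by Theorem \ref{CONE}, all driven by the pointwise bound $a\cdot x\geq 0$ on $\Gamma^{\ast}$. The only cosmetic difference is that the paper bounds each infimum from below by a quantity of the form $\int \frac{d\mu}{a\cdot x+1} - \sup_{p}L(p)$ (and its mirror image), whose sum equals the gap, whereas you combine the two approximating sequences and conclude by contradiction -- the same estimate in different packaging.
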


\begin{proof}
Notice that $r|_{H_a} = 1$ implies that $r(x) = 1 - (a \cdot x + 1)p(x)$ for some $p \in \RR[x]$ and
$$\int [ 1 - (a \cdot x + 1)p(x) ] d\mu \geq \int \left[ \frac{1}{a \cdot x + 1} - p(x) \right] d\mu.$$
Similarly, if $r(x) = -1+ (a \cdot x +1) q(x)$, then
$$\int [ -1 + (a \cdot x + 1)q(x) ] d\mu \geq \int \left[ \frac{-1}{a \cdot x + 1} + q(x) \right] d\mu.$$
The necessity of the positivity of at least one of the two infima now follows immediately from Theorem \ref{CONE}. 
\end{proof}

In the same spirit, allowing now a positive weight against the indeterminate measure, one finds a necessary and sufficient condition.

\begin{thm}
let  $\Gamma \subseteq \RR^d$ be an acute, convex and solid cone and let $\Gamma^\ast$ be the dual cone. Let $\mu$ be an admissible measure supported on $\Gamma^\ast$. The measure $\mu$ is moment indeterminate with respect to the support $\Gamma^\ast$ if and only if there exists 
$a \in {\rm int} \Gamma$ with the property that at least one of the conditions
$$ \inf \{ \int_{\Gamma^\ast} \frac{ r(x) d\mu(x)}{1+ \|x \|} : \ \ r|_{H_a} = \pm 1, \ r|_{\Gamma^\ast} \geq 0, \ r \in \RR[x] \} > 0$$
is satisfied.
\end{thm}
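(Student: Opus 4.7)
The plan is to reduce the weighted infimum criterion to the Fantappiè criterion of Theorem \ref{CONE} by comparing $\frac{r(x)}{1+\|x\|}$ against $\phi(x) - p(x)$, where $\phi(x) := \frac{1}{a \cdot x + 1}$ is the Fantappiè separating function. The transition between the two inequalities is mediated by the auxiliary function $w(x) := \frac{a \cdot x + 1}{1+\|x\|}$, which turns out to be bounded between two positive constants on $\Gamma^\ast$.

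First I would establish the two-sided bound $0 < c \leq w(x) \leq C$ on $\Gamma^\ast$ for $a \in {\rm int}\,\Gamma$. The upper bound is Cauchy--Schwarz. For the lower bound, the standard bipolar duality gives $a \in {\rm int}\,\Gamma \iff a \cdot y > 0$ for every nonzero $y \in \Gamma^\ast$; compactness of the unit sphere of $\Gamma^\ast$ then furnishes $\epsilon > 0$ with $a \cdot x \geq \epsilon \|x\|$ for all $x \in \Gamma^\ast$, so $c := \min(\epsilon, 1) > 0$ works.

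Next I would parametrize the admissible polynomials. For $r \in \RR[x]$ with $r|_{H_a} = 1$, the polynomial $r-1$ vanishes on the hyperplane $H_a$, hence is divisible by $a \cdot x + 1$, so $r(x) = 1 - (a \cdot x + 1) p(x)$ for a unique $p \in \RR[x]$. The nonnegativity constraint on $\Gamma^\ast$ becomes $p \leq \phi$ on $\Gamma^\ast$, and the key identity
\[
\frac{r(x)}{1+\|x\|} = w(x)\bigl(\phi(x) - p(x)\bigr)
\]
holds on $\Gamma^\ast$ with both factors nonnegative. Integrating against $\mu$ and using $c \leq w \leq C$ on ${\rm supp}\,\mu \subseteq \Gamma^\ast$:
\[
c \int (\phi - p)\, d\mu \;\leq\; \int \frac{r}{1+\|x\|}\, d\mu \;\leq\; C \int (\phi - p)\, d\mu.
\]
Taking infimum over $r$ (equivalently, supremum over $p$) shows the $+1$ infimum of the statement is strictly positive exactly when $\int \phi\, d\mu - \sup_p \int p\, d\mu > 0$. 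The analogous parametrization $r = -1 + (a \cdot x + 1) q$ with $q \geq \phi$ on $\Gamma^\ast$ delivers the symmetric conclusion for the $-1$ alternative.

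Finally, Theorem \ref{CONE} asserts that $\mu$ is $\Gamma^\ast$-indeterminate iff for some $a \in {\rm int}\,\Gamma$ the Fantappiè gap $\inf_q \int q\, d\mu - \sup_p \int p\, d\mu$ is strictly positive. This gap splits as the sum of the two nonnegative quantities $\int \phi\, d\mu - \sup_p \int p\, d\mu$ and $\inf_q \int q\, d\mu - \int \phi\, d\mu$, so its positivity is equivalent to at least one of them being positive, hence, by the previous paragraph, to at least one of the two weighted infima in the statement being positive. The main technical obstacle is securing the uniform lower bound $c > 0$ for $w$: this is what forces the restriction $a \in {\rm int}\,\Gamma$ and what allows a single bounded-weight substitution to convert a NASC into a \emph{polynomial-only} criterion whose infimum witnesses indeterminateness.
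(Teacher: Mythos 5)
Your proposal is correct and follows essentially the same route as the paper: the same interior-cone estimate $a \cdot x \geq \gamma \|x\|$ on $\Gamma^\ast$ yielding the two-sided comparison of the weights $\frac{1}{1+\|x\|}$ and $\frac{1}{a\cdot x+1}$, the same factorization $r = \pm 1 \mp (a\cdot x+1)p$ through the ideal of $H_a$, and the same reduction to Theorem \ref{CONE}. You merely spell out the gap-splitting and the bijection between admissible $r$ and the competing polynomials $p \leq \phi$, $q \geq \phi$, which the paper leaves implicit by citing its preceding proposition.
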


\begin{proof}
If $a$ is an interior point of the convex cone $\Gamma$, then the angle between $a$ and any element of the dual cone is bounded from above by a
constant less than $\pi/2$. Consequently, there exists $\gamma>0$ with the property:
$$ a \cdot x \geq \gamma \| x \|, \ \ x \in \Gamma^\ast.$$
Then there are positive constants $C_1, C_2$ so that
$$ \frac{C_1}{1+ \| x \|} \geq \frac{1}{a \cdot x + 1} \geq  \frac{C_2}{1+ \| x \|}, \ \ x \in \Gamma^\ast.$$
In, particular, any polynomial $r$ which is non-negative on $\Gamma^\ast$ satisfies
$$  C_1 \frac{r(x)}{1+ \| x \|} \geq \frac{r(x)}{a \cdot x + 1} \geq  C_2 \frac{r(x)}{1+ \| x \|}, \ \ x \in \Gamma^\ast.$$
Theorem \ref{CONE} completes then the proof.
\end{proof}
\bigskip

A natural base change of rational curves provides a generalization of the moment determinateness in terms of the existence of bounded point evaluations outside the real locus. Specifically, we focus on a real, affine, algebraic curve $X \subset \RR^d$ which admits a proper parametrization
$$ u : \RR \longrightarrow X,$$
where $u$ is a map consisting of real polynomials, such that $u(\RR)$ omits at most finitely many points of $X$ and which is one-to one except finitely many points of $\RR$. We call such an object a {\it real, polynomial curve}. We refer the reader to Chapter 7 of \cite{SWP}, which is fully dedicated to real parametrizations of real, rational curves. The algebra of regular functions on $X$ is denoted $\RR[X] = \RR[x_1,x_2,\ldots,x_d]/I_X$, where $I_X$ is the reduced ideal associated to the variety $X$.
The complexification of $X$ is denoted $X_\CC$, as defined by the ideal $I_X \otimes_\RR \CC \subset \CC[x]$. The polynomial parametrization $u$
extends to $U: \CC \longrightarrow X_\CC$, which remains a proper map with finite fibres. Moreover, the curves $\CC$ and $X_\CC$ are birationally equivalent, that is the pull-back $U^\ast: \CC(X_\CC) \longrightarrow \CC(t)$ is an isomorphism of algebras of rational functions.

We recall the definition of a basic concept in approximation theory. An admissible measure $\nu$ defined on an affine, complex algebraic variety $X_\CC$
admits {\it analytic bounded point evaluations} if there exists a non-trivial open subset $V \subset X_\CC$ and a positive constant $C$, with the property
$$ |p(\lambda)| \leq C \| p \|_{2,\nu}, \ \ \lambda \in V, \ p \in \CC[X_\CC].$$
One can replace the point evaluations by a Bergman or Hardy space norm. For instance, a prescribed point $\lambda \in V$ and a radius $r>0$ such that
the disk $D = D(\lambda,r)$ is fully contained with its closure in $V$ give rise to an estimate of the form:
$$ \| p \|_{2,D} \leq C' \| p \|_{2,\mu},$$
where $C'$ is a positive constant. And vice-versa, such a Bergman space inequality assures the existence of analytic bounded point evaluations inside the disk $D$.

The above estimates refer to polynomial functions, but they can be replaced for instance by rational function or analytic functions subject to growth conditions.
A landmark result of J. Thomson establishes the existence of analytic bounded point evaluations for compactly supported measures on $\CC$ in terms of
the non-density of complex polynomials in Lebesgue space \cite{Thomson}. In our context, M. Riesz' theorem (see Section 22 in \cite{Riesz-3}) asserts that an admissible measure  supported by the real line is indeterminate if and only if it admits analytic bounded point evaluations in the complex plane. We aim at extending this phenomenon to real polynomial curves.

\begin{thm} Let $X \subset \RR^d$ be a real, polynomial curve and let $\mu$ be an admissible, positive measure supported on $X$.
The measure $\mu$ is $X$-indeterminate if and only if there exist bounded analytic point evaluations for $\mu$, supported on the complexification $X_\CC$.
\end{thm}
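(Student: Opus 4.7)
The plan is to reduce to Marcel Riesz's one-variable theorem (Section~22 of \cite{Riesz-3}) via the polynomial parametrization $u\colon\RR\to X$. Let $\nu$ be the pull-back of $\mu$ to $\RR$, admissible by properness of $u$ (which forces the polynomial estimate $|t|\lesssim(1+\|u(t)\|)^{1/D}$ with $D=\max_j\deg u_j$). Birationality of $u$ implies that $\RR[t]$ is a finitely generated module over $u^\ast\RR[X]$, so there exists a nonzero conductor polynomial $c\in\RR[t]$ with $c\cdot\RR[t]\subset u^\ast\RR[X]$; write $c=c_X\circ u$ with $c_X\in\RR[X]$. The zero set of $c$ in $\RR$ is finite and contained in the preimage of the singular locus of $X$. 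The weighted measure $c^2\nu$ on $\RR$ is the one-dimensional companion of $\mu$.

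For the direction $\mu$ is $X$-indeterminate $\Rightarrow$ $\mu$ admits analytic bounded point evaluations on $X_\CC$: any $\mu'\sim_X\mu$ with $\mu'\neq\mu$ pulls back to $\nu'\neq\nu$, and the conductor identity upgrades the $u^\ast\RR[X]$-moment equivalence of $\nu,\nu'$ to full $\RR[t]$-moment equivalence of the positive measures $c^2\nu,c^2\nu'$; these are distinct because $u^\ast\RR[X]$ separates points of $\RR$, so a nonzero signed combination of finitely many atoms cannot have vanishing $u^\ast\RR[X]$-moments. Marcel Riesz's one-variable theorem then yields a bound $|q(z_0)|\leq C\|q\|_{2,c^2\nu}$ for all $q\in\CC[t]$ and $z_0$ in an open subset of $\CC\setminus\RR$. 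Choosing $z_0$ with $c(z_0)\neq 0$, applying the bound to $q=p\circ u$ for $p\in\CC[X_\CC]$, and using $\|p\circ u\|_{2,c^2\nu}=\|c_X\cdot p\|_{2,\mu}$ together with $q(z_0)=p(U(z_0))$, one obtains a bounded point evaluation for $\mu$ at $\lambda_0=U(z_0)\in X_\CC$ restricted to the ideal $c_X\cdot\CC[X_\CC]$; a Hahn--Banach extension across its finite-dimensional complement promotes this to a bounded point evaluation on all of $\CC[X_\CC]$. For the converse direction: a bounded evaluation $|p(\lambda)|\leq C\|p\|_{2,\mu}$ at $\lambda\in X_\CC\setminus X$ admits a Riesz representer $K_\lambda\in L^2(\mu)$ with $p(\lambda)=\int p\overline{K_\lambda}\,d\mu$, and a standard construction adapted from the one-dimensional Nevanlinna parametrization produces a separating continuous function on $X$ for $\mu$, providing a second representing measure via Theorem~\ref{basic}.

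The principal obstacle is the extension step in the first direction: the ideal $c_X\cdot\CC[X_\CC]$ has finite positive codimension $\delta$ in $\CC[X_\CC]$ (equal to the delta invariant of the singular locus of $X$), and extending the bounded point evaluation across a finite-dimensional complement $W$ requires controlling evaluation and projection norms on $W$; equivalence of norms on finite-dimensional spaces ensures the extension exists, but the extension constant depends on the choice of $W$ and on the point $\lambda_0$. A secondary, more routine complication is the bookkeeping of atomic parts on the finite exceptional sets in $\RR$ and $X$, resolved by the separation-of-points property of $u^\ast\RR[X]$ applied to signed atomic measures.
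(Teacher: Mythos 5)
Your forward direction is essentially the paper's argument: reduce to the line via the polynomial parametrization, weight by an element of the conductor of $\RR[t]$ over $u^\ast\RR[X]$ (the paper uses the square $w^2$ of the polynomial vanishing on $G=U^{-1}(F)$, which plays exactly the role of your $c$), apply M.~Riesz' one-variable theorem to the weighted lift, and transfer the resulting point evaluations back to $X_\CC$, dealing with the finite-codimensional defect at the end. Two small corrections there: $u^\ast\RR[X]$ does \emph{not} separate points of $\RR$ at the finitely many identified parameters (for the nodal cubic $u(t)=(t^2-1,t^3-t)$ the signed measure $\delta_1-\delta_{-1}$ has vanishing $u^\ast\RR[X]$-moments), so distinctness of $c^2\nu$ and $c^2\nu'$ must instead be argued as in the paper, by noting that any difference is forced to be atomic on the exceptional fibre and is then annihilated by the weight $c^2$ (this is where Proposition~\ref{multiplier} enters); and your appeal to ``equivalence of norms on finite-dimensional spaces'' does not by itself bound the evaluation on all of $\CC[X_\CC]$, since the algebraic projection onto a complement of a dense finite-codimensional ideal can be unbounded --- the paper circumvents this by passing to a Bergman-norm estimate on a disk where $|w|$ is bounded below (and, in the other direction, by an orthogonal decomposition in $P^2$).

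The genuine gap is your converse. You propose to go from a single bounded point evaluation $|p(\lambda)|\le C\|p\|_{2,\mu}$ on $X_\CC$ directly to a separating function on $X$ via ``a standard construction adapted from the one-dimensional Nevanlinna parametrization.'' No such construction is available on a curve: the Nevanlinna machinery (orthogonal polynomials, three-term recurrence, Weyl circles) is intrinsically one-dimensional, and the existence of a reproducing kernel $K_\lambda$ in $L^2(\mu)$ does not by itself produce a non-polynomial function with a positive Riesz gap in the sense of Theorem~\ref{basic}. The whole content of this implication is, once again, the reduction to the line, and that is how the paper proves it: lift $\mu$ to a measure $\sigma$ on $\RR$ with $u_\ast\sigma=\mu$; use the hypothesis (an \emph{open} set of uniformly bounded evaluations, i.e.\ a Bergman estimate, not a single point) to obtain $\|h\|_{2,D}\le M\|h\|_{2,\sigma}$ for $h$ in the finite-codimensional ideal $I_G\subset\CC[t]$; extend this estimate to all of $\CC[t]$; invoke M.~Riesz' theorem to conclude $\sigma$ is indeterminate; and finally verify that a second representing measure $\tilde\sigma$ pushes forward to a measure genuinely different from $\mu$ --- a point that needs an argument, since $u_\ast$ can collapse differences supported on $u^{-1}(F)$, and which the paper settles by the multiplier argument. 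None of these steps is present in your sketch, so the converse as written does not constitute a proof.
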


\begin{proof}

Let $ u : \RR \longrightarrow X$ denote a proper, polynomial parametrization of the affine curve $X$. Let $F \subset X_\CC$ be a finite set with the property
that the restricted maps
$$ u: \RR \setminus u^{-1}(F) \longrightarrow X \setminus F$$
and
$$ U: \CC \setminus U^{-1}(F) \longrightarrow X_\CC \setminus F$$
are bijective. The minimal set $F$ with this property will be called by abuse of terminology {\it the ramification locus} of the curve $X$. Note that, in general $F$ depends on the parametrization.

Denote $G = U^{-1}F \subset \CC$. Any polynomial $h \in \CC[t]$ vanishing on the finite set $G$ is of the form
$h = U^\ast q = q \circ U,$ where $q \in \CC[X_\CC]$ and $q|_F = 0$. 
Indeed, there exists a rational function $r \in \CC(X_\CC)$ with the property $h = U^\ast r$. The local structure of the finite map $U$ implies that
$r$ is a regular function at all points of $X_\CC$. Lemma 2.1 in \cite{Harris} implies $r \in \CC[X_\CC]$. If the polynomial $h$ is real, and vanishes on 
the finite set $G$, then one can choose a real element $q \in \RR[X]$ satisfying $h = U^\ast q = u^\ast q$. 
Denote by $w \in \RR[t]$ the polynomial with simple zeros at $G$. Accordingly, $w = U^\ast \omega$, where $\omega \in \RR[X]$.
We note in passing that the class of smooth, complete intersection curves allowing a polynomial parametrization is distinguished by an empty ramification locus, \cite{BenbouzianeHouariKahoui}. 

 Assume two admissible measures $\mu_1, \mu_2$ supported by $X$ on are moment equivalent, but distinct. According to Proposition \ref{multiplier} the measures $\omega^2 \mu_1$
 and $\omega^2 \mu_2$ are moment equivalent and distinct, and similarly for $\omega^4 \mu_j, \ j =1,2.$

Since the restricted map $u|_{\RR \setminus u^{-1}F}$ is bijective and the measures $\omega^2 \mu_1, \omega^2 \mu_2$ do not possess atoms on $F$, one can define by push-forward positive measures $\sigma_1, \sigma_2$
on $\RR \setminus u^{-1}F$ with the properties
$$ \int h(u(t)) \sigma_j(dt) = \int_{X\setminus F} h \omega^2 d\mu_j, \ \ h \in \RR[X], \ j=1,2.$$
\bigskip

One step further, one considers the measures $w^2 \sigma_j$ and their push-forward by $u$ measures $\omega^4 \mu_j, \ j=1,2.$
Since every element $h \in \RR[t]$ ``descends" to $X$ after multiplication by $w^2$:  $ hw^2 = u^\ast q$, with $q \in \RR[X]$, we infer
that the measures $ \sigma_1, \sigma_2$ are moment equivalent. Indeed, for every $h \in \RR[t]$ one finds:
$$ \int h(t) w^2(t) \sigma_1 (dt) = \int q(u(t)) \sigma_1(dt) = \int q \omega^2 d\mu_1 =  $$ $$ 
\int q \omega^2 d\mu_2 =  \int q(u(t)) \sigma_1(dt) = \int h(t) w^2(t) \sigma_1 (dt).$$

Assume by contradiction that $w^2 \sigma_1 = w^2 \sigma_2$ as Radon measures on the line. Then $\sigma_1-\sigma_2$ is a sum of point masses
concentrated on the set $G \cap \RR$. By construction, $\sigma_1, \sigma_2$ do not have point masses on $G \cap \RR$, hence $\sigma_1 = \sigma_2$.
Therefore $\omega^2 \mu_1 = \omega^2 \mu_2$, a contradiction. 

In view of M. Riesz' theorem there exist analytic bounded point evaluations on (relatively compact subsets of) $\CC \setminus \RR$, for $w^2\sigma_1, w^2\sigma_2$. As a matter of fact every point
$\alpha \in \CC \setminus \RR$ satisfies
\begin{equation}\label{weight}
 |p(\alpha)|^2 \leq C \int p^2 w^2 d\sigma_j, \ \ j=1,2,
 \end{equation}
where $C = C(\alpha) >0$ is locally bounded and $p \in \RR[t].$
On the base $X$ of the map $u$ we infer:
$$
|f(\beta)|^2 \leq C(\beta)  \int f^2 \omega^4 d\mu_1 = C(\beta) \int f^2 \omega^4 d\mu_2, \ f \in \RR[X],
$$ where $\beta = U(\alpha) \in X_\CC \setminus X$. The constant $C(\beta)$ is still locally bounded as a function of $\beta$.
Choose a point $\lambda \in X_\CC \setminus X$ and a sufficiently small radius $r$, so that the function $|w|$ does not vanish on the closure of the disk $D = D(\lambda,r)$. Consequently there are constants $M,M' >0$ with the property:
$$ \| f \|_{2,D} \leq M'\left \| \frac{f}{|w|^2} \right\|_{2,D} \leq M \left\|  \frac{f}{|w|^2} \right\|_{2, w^4 \mu} = M \| f \|_{2,\mu}, \ \ f \in \RR[X].$$

To prove the other implication we assume that the admissible measure $\mu$ defined on $X$ admits analytic bounded point evaluations. If the measure $\mu$ does not possess point masses on the ramification locus $F$, then one can define without ambiguity an admissible measure $\sigma$ on $\RR$, with the property
$\mu = u_\ast \sigma$. Even if the measure $\mu$ has atoms on $F$ one can choose, with some degree of freedom, such a lift $\sigma$ by selecting point masses on $G = u^{-1} F$ which satisfy
$$ \int_\RR f\circ u d\sigma = \int_X f d\mu, \ \ f \in \RR[X].$$
The assumption on the existence of analytic bounded point evaluations imposed on the measure $\mu$ implies that there exist $\lambda \in \CC \setminus \RR$ and $r>0$ carrying the estimate
\begin{equation}\label{Bergman}
 \| h \|_{2, D} \leq M \| h \|_{2, \sigma}, \ \ h \in V,
 \end{equation}
where $V = I_G \subset \CC[t]$ is a finite codimensional subspace.

Next we prove that a bound of the form (\ref{Bergman}) is true for all polynomials $h \in \CC[t]$. To this aim, let $h_1, \ldots, h_n \in \CC[t]$ be a linearly independent system
which spans $\CC[t]/V$. Denote by $P^2(\mu)$ the closure of polynomials in $L^2(\mu)$ and respectively by $L_a^2(D)$ the closure of polynomials in $L^2(D, {\rm d \ Area})$.
The entire algebra $\CC[t]$, and a fortiori $V$, is a subspace of both $P^2(\mu)$ and $L^2_a(D)$. Denote by $H$ the closure of $V$ in $P^2(\mu)$ and by $K$ the closure of $V$ in
Bergman space $L^2_a(D)$. The restriction map
$$ R: H \longrightarrow K, \ \ R(h) = h|_D, \ \ h \in V,$$
is linear and continuous by \eqref{Bergman}. Denote by $P_H$ the orthogonal projection of $P^2$ onto $H$. The elements $f_j - P_H(f_j), \ 1 \leq j \leq n,$ span the finite dimensional orthogonal complement $H_1 = P^2(\mu) \ominus H$.

In particular every polynomial $h \in \CC[t]$ can be decomposed as 
$$ h = h_1 + h_2, \ \ h_1 \in H_1, h_2 \in H.$$
The restriction of polynomials to the disk $D$ extends by continuity to the operation 
$$ (f_j - P_H(f_j))|_D := (f_j)|_D - R P_H(f_j), \ \ 1 \leq j \leq n,$$
hence one can define 
$$h|_D = h_1|_D + R (h_2)$$ to the effect
$$ \| h\|^2_{2,\mu} = \| h_1\|^2_{2,\mu} + \|h_2\|^2_{2,\mu}.$$
On the other hand, $H_1$ is a finite dimensional space, endowed with a continuous restriction map to the Bergman space $L^2_a(D)$. Hence there exists a constant $M_1$ with the property.
$$ \| h_1 \|_{2,D} \leq M_1 \| h_1 \|_{2,\mu},\ h_1 \in H_1.$$
In conclusion, for all polynomials $h \in \CC[t]$ an estimate  (\ref{Bergman}) exists. According to M. Riesz' theorem, the measure $\sigma$ is indeterminate.

Let $\tilde{\sigma}$ be an admissible measure on the real line, moment equivalent to $\sigma$,  but different from $\sigma$. Clearly the measure $\tilde{\mu} = u_\ast \tilde{\sigma}$ is
moment equivalent to $\mu$. If $\mu = \tilde{\mu}$, then the measures $\sigma, \tilde{\sigma}$ coincide on the space of continuous functions on $\RR$, vanishing on the finite fibre $G$.
The proof of Proposition \ref{multiplier} implies $\sigma = \tilde{\sigma}$, a contradiction.
\end{proof}

One may naturally ask what happens on rational curves. The first observation is that push forward via rational parametrizations may alter indeterminateness. A simple example is the 
embedding
$$ F(t) = \left(t, \frac{1}{1+t^2} \right), \  t \in \RR.$$
Any admissible measure in $\RR^2$ supported by the image curve $\Gamma$ of equation $ y(1+x^2) =1$ is determinate, due to the fact that bounded polynomials on $\Gamma$ are separating points.
\bigskip

The class of real algebraic curves on which a positive polynomial function is a sum of squares was thoroughly studied by Plaumann \cite{Plaumann} and Scheiderer \cite{Scheiderer1}, \cite{Scheiderer2}, \cite{Scheiderer3} and \cite{Scheiderer4}. Similarly to the real line, on such curves, the mere existence of bounded point evaluations has moment determinateness implications.

\begin{thm} Let $X \subset \RR^d$ be a real algebraic curve on which all positive polynomials are sums of squares. Let $\mu$ be an admissible measure supported by $X$
which admits analytic bounded point evaluations on $X_\CC$. Then the measure $\mu$ is indeterminate.
\end{thm}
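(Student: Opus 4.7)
The proof follows M. Riesz's one-variable operator-theoretic strategy, with the sum-of-squares hypothesis bridging positivity and Hilbert-space structure via Haviland's theorem. Let $\cH = P^2(\mu)$ be the closure of $\CC[X]$ in $L^2(\mu)$, and let $T_j$ denote multiplication by $x_j$ on the dense domain $\CC[X]$; each $T_j$ is symmetric on $\cH$, and the tuple satisfies $p(T_1, \ldots, T_d) = 0$ for every $p \in I_X$. The bounded point evaluation at $\lambda \in X_\CC \setminus X$ yields a nonzero $k_\lambda \in \cH$ with $p(\lambda) = \langle p, k_\lambda\rangle_\mu$ for every $p \in \CC[X]$; a direct calculation
\[
\langle T_j p, k_\lambda\rangle_\mu = (x_j p)(\lambda) = \lambda_j \langle p, k_\lambda\rangle_\mu
\]
gives $T_j^* k_\lambda = \overline{\lambda_j}\, k_\lambda$. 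Because $X = X_\CC \cap \RR^d$, at least one $\lambda_j$ is non-real, so $k_\lambda$ is a genuine deficiency vector for the corresponding $T_j$, obstructing essential self-adjointness.

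Choose a real linear form $\ell = \sum a_j x_j$ with $\ell(\lambda) \notin \RR$ and set $T_\ell = \sum a_j T_j$; then $k_\lambda$ is a deficiency vector of $T_\ell$ at $\overline{\ell(\lambda)}$, so $T_\ell$ possesses a continuous family of distinct self-adjoint extensions. Since $X$ is a curve, Noether normalization makes $\RR[X]$ a finitely generated $\RR[\ell]$-module, and every coordinate operator $T_j$ is algebraic over $T_\ell$ via a monic polynomial relation drawn from $I_X$. These algebraic relations allow each self-adjoint extension $\widetilde{T_\ell}$ to be lifted, by selecting compatible spectral branches, to a strongly commuting self-adjoint tuple $(\widetilde{T_1}, \ldots, \widetilde{T_d})$ whose joint spectral measure $E$ is supported on the real zero set of $I_X$, which is $X$. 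The sum-of-squares hypothesis, via Haviland's theorem, then guarantees that the scalar measure $\langle E(\cdot)\, 1, 1\rangle_\mu$ is an admissible representing measure of $L_\mu$ on $X$; since distinct extensions $\widetilde{T_\ell}$ yield distinct such measures, $\mu$ is indeterminate.

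\textbf{Main obstacle.} The delicate technical point is lifting each self-adjoint extension of the single symmetric operator $T_\ell$ to a strongly commuting self-adjoint tuple realizing the algebraic relations of $X$ --- that is, selecting consistent sheets of the finite cover $X \to \ell(X)$ over the extended spectrum of $\widetilde{T_\ell}$ and verifying that the resulting operators are genuinely self-adjoint and mutually strongly commute. This step is feasible precisely because $X$ is one-dimensional, so all coordinate functions are algebraic over a single transcendence base. The sum-of-squares hypothesis enters decisively in the closing line, where Haviland's theorem is invoked to certify that the joint spectral measure lives on $X$ itself, rather than on some larger real-algebraic variety on which the operator identities from $I_X$ happen to hold.
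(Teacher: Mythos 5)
Your proposal takes a genuinely different, operator-theoretic route from the paper, but it contains a gap that is not a technicality: it is the central unsolved difficulty of the multivariate moment problem, and your sketch does not resolve it. The step you flag as the ``main obstacle'' --- lifting each self-adjoint extension $\widetilde{T_\ell}$ of the single symmetric operator $T_\ell$ to a strongly commuting self-adjoint tuple $(\widetilde{T_1},\ldots,\widetilde{T_d})$ satisfying the relations of $I_X$ and with joint spectrum in the real locus $X$ --- is precisely where such arguments break down. A commuting family of symmetric operators need not admit any commuting self-adjoint extensions, let alone a family of distinct ones parametrized by the extensions of $T_\ell$; the monic algebraic relation expressing $T_j$ over $T_\ell$ does not determine $\widetilde{T_j}$ from $\widetilde{T_\ell}$, because selecting ``spectral branches'' of the finite cover $X \to \ell(X)$ requires a measurable, globally consistent choice of sheets across branch points, must reproduce the original $T_j$ on $\CC[X]$, and must avoid the non-real points of the fibres (nothing forces the fibre over a point of $\mathrm{spec}\,\widetilde{T_\ell}$ to be real). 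You assert feasibility ``because $X$ is one-dimensional,'' but no construction is given, and without it the proof does not exist. You would also still need to show that distinct extensions $\widetilde{T_\ell}$ produce distinct measures on $X$, which does not follow formally.

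A second, related problem is that the sum-of-squares hypothesis is doing no work where you place it. If a joint spectral measure supported on $X$ and extending the $T_j$ were already in hand, the scalar measure $\langle E(\cdot)1,1\rangle$ would represent $L_\mu$ on $X$ automatically; Haviland's theorem would be superfluous. In the paper's proof the SOS hypothesis is the engine of the argument, used in a completely different way: for a bounded point evaluation at $\beta \in X_\CC \setminus X$ and any polynomial minorant $q$ of $1/\|x-\beta\|^2$ on $X$, the positive polynomial $1 - \|x-\beta\|^2 q(x)$ is written as $\sum_k p_k^2$; evaluating at $\beta$ gives $\sum_k p_k(\beta)^2 = 1$, and the bounded point evaluation then yields the uniform lower bound $\int (1 - \|x-\beta\|^2 q)\,d\mu = \sum_k \int p_k^2\, d\mu \geq C^{-1} > 0$. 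This exhibits $1/\|x-\beta\|^2$ as a Riesz separating function for the measure $\|x-\beta\|^2 d\mu$, and Proposition \ref{multiplier} transfers indeterminacy back to $\mu$. That pairing of an SOS certificate against a reproducing kernel is the idea your argument is missing; without it (or a genuine construction of the commuting extensions) the proposal does not prove the theorem.
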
 

\begin{proof} Assume first that $\beta \in X_\CC \setminus X$ is a $P^2(\mu)$-bounded point evaluation. That is:
$$ |p(\beta)|^2 \leq C \| p \|_{2,\mu}^2, \ \ p \in \RR[X].$$
Note that
$$\inf_{x \in X}  \| x -\beta \| \geq \delta > 0.$$
We prove that the function $\frac{1}{\| x-\beta \|^2}$ is separating for the measure $\|x-\beta\|^2 d\mu(x)$.

Indeed, let $q \in \RR[X]$ satisfy 
$$ \frac{1}{\| x-\beta \|^2} - q(x) > 0, \ \ x \in X.$$
By assumption, the positive polynomial $1- \|x-\beta\|^2 q(x)$ is a finite sum of squares of elements $p_k \in \RR[X]$:
$$ 1- \|x-\beta\|^2 q(x) = \sum_{k=1}^m p_k(x)^2.$$
Note that $ \sum_{k=1}^m |p_k(\beta)|^2 =1$. Consequently,
$$ \sum_{k=1}^m \int |p_k(x)|^2 d \mu(x) \geq \frac{1}{C} \sum_{k=1}^m |p_k(\beta)|^2 = \frac{1}{C} > 0.$$
Therefore
$$ \inf_{q(x) <  \frac{1}{\| x-\beta \|^2}} \int [\frac{1}{\| x-\beta \|^2} - q(x)] \| x-\beta\|^2 d\mu(x) >0,$$
that is the continuous function of polynomial growth $\frac{1}{\| x-\beta \|^2}$ is separating for the measure $\|x-\beta\|^2 d\mu(x)$.

The assumption in the statement was stronger. In particular, we can assume that a full neighborhood of $\beta$ consists of $P^2(\mu)$ bounded point evaluations
with respect to the same constant. For instance
$$ |p(\alpha)|^2 ||\alpha-\beta\|^4 = | p(\alpha) \| \alpha-\beta\|^2|^2 \leq C \int |p(x)|^2 ||x-\beta\|^4 \frac{\mu( d x)}{||x-\beta\|^4}, \ \ p \in \CC[X].$$
By choosing $\alpha$ in an open ball $B = B(\lambda, r)$ with $\| \beta - \alpha \| > r$ we infer
$$ \int_B |p(z)|^2 d {\rm vol} \leq M \int_X |p(x)|^2 \| x - \beta \|^4 \frac{\mu( d x)}{||x-\beta\|^4}, \ \ p \in \CC[z].$$
That is, the measure $\frac{\mu}{||x-\beta\|^4}$ carries analytic bounded point evaluations on a finite codimensional subspace of $P^2(\mu)$ (and of Bergman's space
$L^2_a(B)$). Indeed, $X_\CC$ is an algebraic curve and the ideal generated by the function $\|z-\beta\|^2$ in $\CC[X_\CC]$ has finite codimension.
The proof of the preceding theorem implies that $\frac{\mu}{||x-\beta\|^4}$ admits analytic bounded point evaluations.

Then the first part of the proof takes over, implying that the measure $\tilde{\mu} = \| x-\beta\|^2 \frac{\mu}{||x-\beta\|^4}$ is indeterminate. Finally, Proposition \ref{multiplier} completes the proof.
\end{proof}

\section{Examples}

\subsection{Finite maps of real algebraic varieties}

It is well known that there exists an admissible measure $\sigma$ supported on $[0,\infty)$ which is $\RR$-indeterminate, but $[0,\infty)$-determinate. The stark difference between Stieltjes, respectively Hamburger, determinateness is well analyzed in Berg and Thill \cite{BT}.

We put this pathology on algebraic curves, showing that even finite morphisms of curves are not expected to preserve moment indeterminateness. More precisely, let $\Gamma$ be the parabola of equation $x = y^2$ in $\RR^2$ and define the measure $\mu$, supported on $\pi$ as follows:
$$ \int f(x,y) d\mu = \int_\RR f(t^2,t) d\sigma(t), \ f \in \RR[x,y].$$
The projection of the $y$-axis is an isomorphim of smooth curves, therefore the measure $\mu$ is $\Gamma$-indeterminate. On the other hand, consider the projection $\pi$ of $\RR^2$ onto the $x$-axis. The image $\pi \Gamma = [0,\infty) $ of the curve $\Gamma$ is not a smooth variety, but only, as expected over the real field, simply a semi-algebraic set.
The measure $\pi_\ast \mu$ acts as follows:
$$ \int h d \pi_\ast \mu = \int h(t^2) d \sigma(t), \ \ h \in \RR[t].$$
But this measure is moment-determined on the line.

It is worth mentioning that the above exceptional measure $\mu$ has a point mass on the ramification locus of the projection map $\pi$.

\subsection{Real Polynomial Curves}

The affine curves allowing a polynomial parametrization referred to in our study were known forever, with a changing perspective over centuries: from descriptive geometric or mechanical features, to illustrations of progress in algebraic geometry, or more recently as computational/algorithmic objects of interest. We merely give below a glimpse into the subject, with a few bibliographical indications.

The simplest class of real polynomial curves is described by graphs of polynomial maps $p: \RR \longrightarrow \RR^{d-1}$, as for instance the {\it parabola}
$$ y = p(x), \ \ p(x) = a x^2 + b, \ \ a \neq 0.$$
In the above example and throughout this subsection, $(x,y)$ stand for the coordinates in affine space $\RR^2$, or if necessary to pass to complex coordinates, even in $\CC^2$.

A celebrated theorem of Abhyankar and Moh \cite{AbhyankarMoh} asserts that a smooth, rational curve $X \subset \CC^2$ admits a polynomial parametrization if and only if
it can be transformed into a straight line by invertible linear transforms and automorphisms of the form
$$ x_1 = x + h(y), \ \ y_1 =  y,\ \ h \in \CC[y].$$

One step further, Zaindenberg and Lin \cite{ZaidenbergLin} proved that any simply connected, irreducible polynomial curve in $\CC^2$ is equivalent, in the above sense, to
a {\it basic cusp} curve:
$$ x^k = y^\ell,$$ where
$k, \ell$ are relatively prime positive integers. The simple, real polynomial parametrization of this curve is $x = t^\ell, y = t^k$. As a consequence, it is interesting to note that all
2D simply connected polynomial curves have at most one singular point. In this simple situation, the ramification locus of the curve reduces to a single point
$F = \{ (0,0) \}.$

The cited landmark article by Abhyankar and Moh \cite{AbhyankarMoh} contains the following characterization: a rational curve $X \subset \CC^2$ admits a polynomial parametrization if and only if its compactification in projective space contains a single place at infinity. That means that the polynomial equation
describing the curve
$$ X = \{ (x,y) \in \CC^2; \ F(x,y) = 0 \}$$
starts with an exact power of a linear function, plus a reminder:
$$ F(x,y) = (ax+by)^d + G(x,y), \ \ |a| + |b| >0, \ \ \deg G < d.$$

A few low degree examples are in order. First, a {\it cubic with a nodal singular point} admits a polynomial parametrization:
$$ y^2 = x^2(x+1); \ \ x = t^2-1, \ y = t^3 -t.$$ Again, the ramification locus is $\{ (0,0)\}$, with the associated weight $\omega(t) = t$.
Among 2D quartics, the {\it Kampyle of Eudoxus} is a polynomial curve:
$$x^4 = a^2 (x^2 + y^2), \ \ a >0,$$
or better, in polar coordinates
$$ \rho = \frac{a}{\cos^2 \theta},$$
can be rationally parametrized as function of $t = \tan \frac{\theta}{2}$. Since the place at infinity is reduced to a point, we deduce from Abhyankar-Moh Theorem that a proper, polynomial parametrization exists.

Another quartic in two dimensions exhibits a {\it Ramphoid cusp} (that is both branches at the singular point are tangent to the same semi-axis):
$$ y^4 - 2axy^2 - 4ax^2y - ax^3 + a^2x^2 =0, \ \ a>0,$$
with parametrization
$$ x = a t^4, \ y = a(t^2 + t^3).$$

Finally, {\it l'Hospital quintic} provides another example:
$$ 64 y^5 = a (25 x^2 + 20 y^2 - 20 a y + 4a^2)^2, \ \ a>0, $$
with parametrization
$$ x = \frac{a}{2} (t - \frac{t^5}{5}), \ \ y = \frac{a}{4}(1+t^2)^2.$$
The botanics of algebraic curves is quite substantial, hiding small and big wonders. For more details we refer to the award winning website \\ {\bf https://mathcurve.com}
built and maintained by Robert Ferr\'eol.

The qualitative studies pertaining to (real) polynomial curves are also quite numerous, motivated by applications. The basic reference is the algorithmic oriented book
\cite{SWP} which treats in detail real rational curves and polynomial curves. Re-parametrizations of rational curves with one place at infinity are treated in \cite{ManochaCanny}.
A constructive approach to transforming smooth, polynomial curves into lines is taken in \cite{LamShpilrainYu}, while \cite{BenbouzianeHouariKahoui} constructs a polynomial parametrization of the same class of curves by integrating an appropriate vector field.

\end{document}